\begin{document}

 \bibliographystyle{plain}
 \newtheorem{theorem}{Theorem}
 \newtheorem{lemma}[theorem]{Lemma}
 \newtheorem{corollary}[theorem]{Corollary}
 \newtheorem{problem}[theorem]{Problem}
 \newtheorem{conjecture}[theorem]{Conjecture}
 \newtheorem{definition}[theorem]{Definition}
 \newtheorem{prop}[theorem]{Proposition}
 \numberwithin{equation}{section}
 \numberwithin{theorem}{section}

 \newcommand{\mo}{~\mathrm{mod}~}
 \newcommand{\mc}{\mathcal}
 \newcommand{\rar}{\rightarrow}
 \newcommand{\Rar}{\Rightarrow}
 \newcommand{\lar}{\leftarrow}
 \newcommand{\lrar}{\leftrightarrow}
 \newcommand{\Lrar}{\Leftrightarrow}
 \newcommand{\zpz}{\mathbb{Z}/p\mathbb{Z}}
 \newcommand{\mbb}{\mathbb}
 \newcommand{\B}{\mc{B}}
 \newcommand{\cc}{\mc{C}}
 \newcommand{\D}{\mc{D}}
 \newcommand{\E}{\mc{E}}
 \newcommand{\F}{\mathbb{F}}
 \newcommand{\G}{\mc{G}}
  \newcommand{\ZG}{\Z (G)}
 \newcommand{\FN}{\F_n}
 \newcommand{\I}{\mc{I}}
 \newcommand{\J}{\mc{J}}
 \newcommand{\M}{\mc{M}}
 \newcommand{\nn}{\mc{N}}
 \newcommand{\qq}{\mc{Q}}
 \newcommand{\PP}{\mc{P}}
 \newcommand{\U}{\mc{U}}
 \newcommand{\X}{\mc{X}}
 \newcommand{\Y}{\mc{Y}}
 \newcommand{\itQ}{\mc{Q}}
 \newcommand{\sgn}{\mathrm{sgn}}
 \newcommand{\C}{\mathbb{C}}
 \newcommand{\R}{\mathbb{R}}
 \newcommand{\T}{\mathbb{T}}
 \newcommand{\N}{\mathbb{N}}
 \newcommand{\Q}{\mathbb{Q}}
 \newcommand{\Z}{\mathbb{Z}}
 \newcommand{\A}{\mathcal{A}}
 \newcommand{\ff}{\mathfrak F}
 \newcommand{\fb}{f_{\beta}}
 \newcommand{\fg}{f_{\gamma}}
 \newcommand{\gb}{g_{\beta}}
 \newcommand{\vphi}{\varphi}
 \newcommand{\whXq}{\widehat{X}_q(0)}
 \newcommand{\Xnn}{g_{n,N}}
 \newcommand{\lf}{\left\lfloor}
 \newcommand{\rf}{\right\rfloor}
 \newcommand{\lQx}{L_Q(x)}
 \newcommand{\lQQ}{\frac{\lQx}{Q}}
 \newcommand{\rQx}{R_Q(x)}
 \newcommand{\rQQ}{\frac{\rQx}{Q}}
 \newcommand{\elQ}{\ell_Q(\alpha )}
 \newcommand{\oa}{\overline{a}}
 \newcommand{\oI}{\overline{I}}
 \newcommand{\dx}{\text{\rm d}x}
 \newcommand{\dy}{\text{\rm d}y}
\newcommand{\cal}[1]{\mathcal{#1}}
\newcommand{\cH}{{\cal H}}
\newcommand{\diam}{\operatorname{diam}}
\newcommand{\bx}{\mathbf{x}}
\newcommand{\Ps}{\varphi}

\parskip=0.5ex

\title[Gaps problems and frequencies of patches]{Gaps problems and frequencies of patches in cut and project sets}
\author[Haynes, Koivusalo, Sadun, Walton]{Alan~Haynes,~
Henna~Koivusalo,~
Lorenzo~Sadun,~
James~Walton}
\thanks{AH, HK, JW: Research supported by EPSRC. \\ \phantom{A..}LS: Research partially supported by NSF Grant DMS-1101326.}

\allowdisplaybreaks

\begin{abstract}
We establish a connection
between gaps problems in Diophantine approximation and the frequency spectrum of patches in cut and project sets with special windows. Our theorems
provide bounds for the number of distinct frequencies of patches of
size $r$, which depend on the precise cut and project sets being used,
and which are almost always less than a power of $\log r$. Furthermore, for a substantial collection of cut and project sets we show that the number of
frequencies of patches of size $r$ remains bounded as $r$ tends to
infinity. The latter result applies to a collection of cut and project
sets of full Hausdorff dimension.
\end{abstract}

\maketitle

\section{Introduction}

\subsection{Overview}\label{SEC.Overview}

The theory of multidimensional Diophantine approximation is concerned with approximating subspaces (or more generally, manifolds) in higher dimensional spaces by rational points. This is closely related to
the theory of cut and project sets, in which
portions of a higher dimensional lattice are projected onto a
lower dimensional set. Cut and project sets are important for a number of reasons. They arise as central objects of study in many problems in number theory and dynamical systems and, as discovered by de Bruijn \cite{deBr1981}, they provide a rich source of aperiodic tilings of Euclidean space. They also give us a convenient mathematical model for producing patterns which occur naturally in physical materials known as quasicrystals \cite{Sene1995, ShecBlecGratCahn1984}. The particular problems which we are studying in this paper have direct consequences to understanding the distribution of distances between molecules and patterns in quasicrystals \cite{Slat1959}, as well as to related problems of understanding the distribution of energy levels in sums of forced harmonic oscillators \cite{BlehHommJiRoedShen2012}.

In Diophantine approximation there are many `gaps' theorems (surveys of which can be found in \cite{FraeHolz1995} and \cite{GeelSimp1993}), which can be divided into two categories. The first category is what we will call Slater-type results, and the second, what we will call Steinhaus-type results. Before stating our main theorems we briefly explain how both of these types of results can be formulated in a natural way as problems about cut and project sets.

In Slater-type results we start with a $\Z^d$-action on a (possibly higher dimensional) torus and count the number of patterns of return times of the orbit of a point to some target region. The prototypical example of this is Slater's Theorem, proved in \cite{Slat1950}, in which we consider return times to an interval of the orbit of a point in $\R/\Z$ under the action of an irrational rotation. Slater's Theorem says that the gap between any two consecutive return times takes one of at most $3$ values. From the point of view of cut and project sets, Slater-type results are results about patch complexity (the number of patterns of a given size). They are directly linked to problems about the cohomology of tiling spaces associated to cut and project sets. This is demonstrated, although without the connection to Diophantine approximation, by a theorem of Julien in \cite{Juli2010}, which shows that the cohomology is finitely generated if and only if the number of patches of size $r$ grows asymptotically as slowly as possible.

In Steinhaus-type results we again consider the orbit of a point in a torus, under a $\Z^d$-action. This time we are interested in understanding the `gaps' between points in the torus in finite pieces of the orbit. The prototype here is the Steinhaus Theorem (first proved by S\'{o}s \cite{Sos1957} and \'{S}wierczkowski \cite{Swie1958}), which says that, for any $\alpha\in\R$ and $N\in\N$, the points $n\alpha~\mathrm{mod}~1$ with $1\le n\le N$, partition the circle into intervals of at most $3$ distinct lengths. By an ergodic theoretic argument, Steinhaus-type problems are equivalent to problems about the number of frequencies of patches of a given size in cut and project sets. {\em This observation will be formalized and developed below, and it will serve as the main gateway in this paper for us to convert back and forth between problems in Diophantine approximation and problems about cut and project sets}.

\subsection{Definitions and results} Let $E$ be a $d$-dimensional subspace of $\R^k$.  After possibly
re-ordering the coordinates of $\R^k$, we can write $E$ as the graph
of a linear function $L: \R^d \to \R^{k-d}$. That is,
\begin{equation} E = \{(x,L(x)): x \in \R^d\}.\end{equation}
$L$ can be expressed as matrix
multiplication: $L(x)=Ax$, or in coordinates,
\[L_i(x) := L(x)_{i} =\sum_{j=1}^d \alpha_{ij} x_j,\]
and we use the $d(k-d)$ matrix elements
$\{\alpha_{ij}\}$ to parametrize the choice of $E$. We also suppose that
$E$ is {\bf totally irrational}, meaning that $E+\Z^k$ is dense in $\R^k$.
Next consider subspaces $F_0, F$ that are complementary to $E$, such
that $F_0 \cap \Z^k = \{0\}$, and let $\pi_1: \R^k \to E$ and
$\pi_2: \R^k \to E$ be projections along $F_0$ and $F$, respectively.
Let $\pi^*: \R^k \to F$ be the projection along $E$. The choice of $F$
is somewhat arbitrary, and we will take $F = \{0\} \times \R^{k-d}$.
Alternate choices found frequently in the literature include $F_0$ itself,
and the orthogonal complement to $E$.

Let $\mc{W}\subset F$ be a compact set that is the closure of its interior, which we will refer to as the {\bf window}. For our
purposes, the window will always be a polytope, usually a parallelotope spanned by
integer vectors. We also define a set $\mc{S}\subset\R^k$, which we call the {\bf strip}, by
$\mc{S} = {\pi^*}^{-1}(\mc{W})$. Finally, for $s \in \R^k/\Z^k$, we define
$$Y_s = \pi_1(\mc{S} \cap (\Z^k + s)).$$
The set $Y_s$ is called a {\bf cut and project set}, or a model set,
and is always a Delone set of finite local complexity.
The significance of the
condition $\Z^k \cap F_0 = \{0\}$ is that each point in $Y_s$ is associated
with a unique point in $\Z^k+s$.
If $s$ is {\bf
regular}, meaning that $(\Z^k + s)$ does not intersect the boundary
of $\mc{S}$, then $Y_s$ is repetitive, meaning that all patterns
appear infinitely often and relatively densely.
One can consider the space of all sets $Y_s$ with $s$ regular, plus
appropriate limiting point patterns.
There is a natural translational action of $E$ on this
tiling space, and the resulting dynamical system is measurably
conjugate to the translational action of $E$ on $\R^k/\Z^k$. The only
invariant measure is Lebesgue measure (since $E$ is totally irrational), and the ergodic theorem relates
statistical properties of any $Y_s$ to volumes in $\R^k$, or to volumes in
$F$. Since these statistical properties are independent of $s$, we
henceforth suppress the subscript and denote our cut and project set as $Y$. See Figure 1.

\begin{figure}[h]\label{fig:cut-and-project}
\centering
\def\svgwidth{0.55\columnwidth}
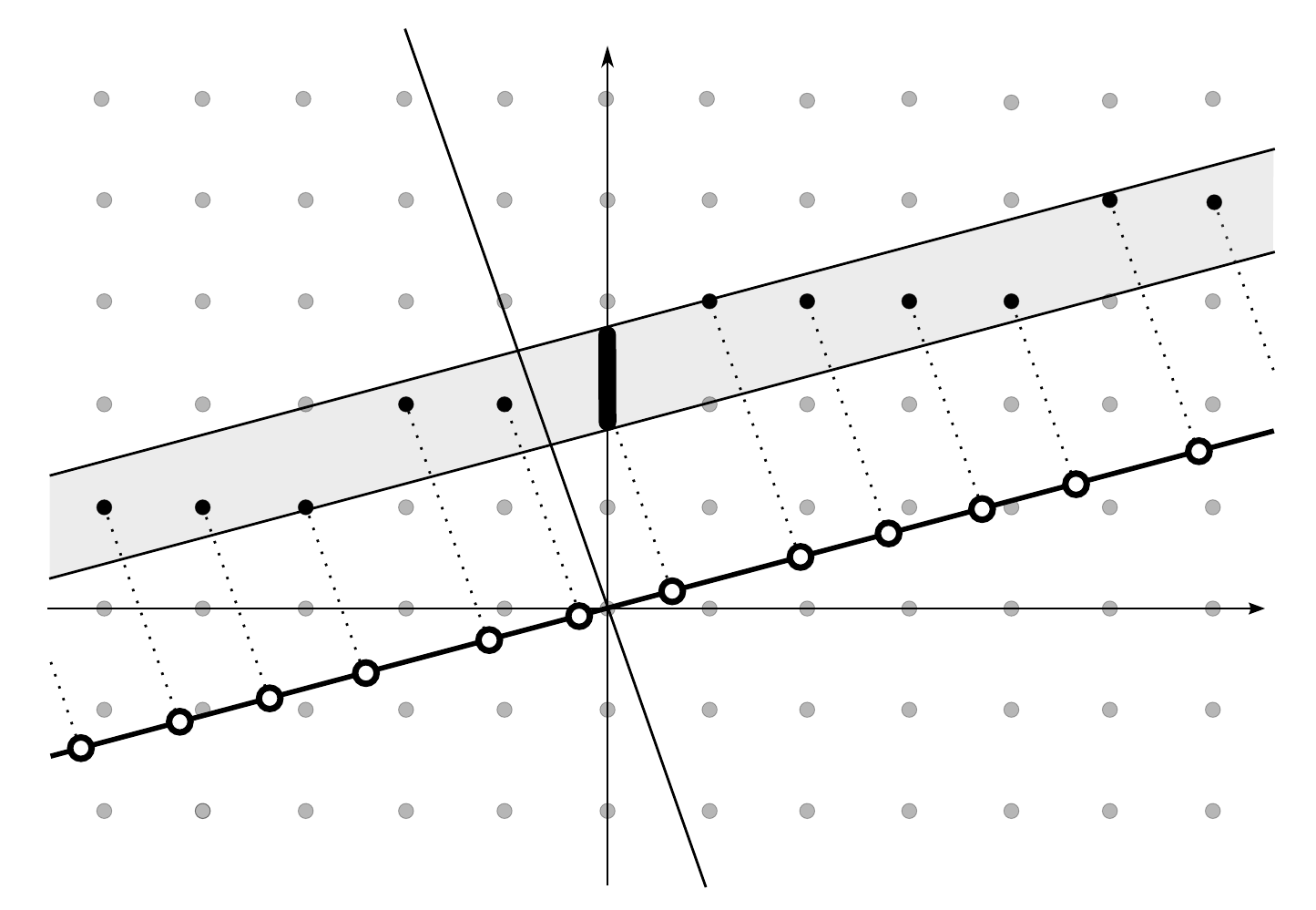
\caption{Definition of the cut and project set $Y$.}
\end{figure}

In what follows we will work with two definitions of patches of size
$r$ in $Y$, illustrated in Figure 2. Assume that we are given a bounded convex set $\Omega\subseteq E$
which contains a neighborhood of $0$ in $E$. For $y\in Y$ and $r\ge 0$
define $P_1(y,r)$, the {\bf type $\mathbf{1}$ patch} of size $r$ at $y$, by
\[P_1(y,r):=\{y'\in Y:y'-y\in r\Omega\}.\] Writing $\tilde{y}$ for the
(unique) point in $\mc{S} \cap (\Z^k + s)$ with $\pi_1(\tilde y)=y$,
we define $P_2(y,r)$, the {\bf type $\mathbf{2}$
patch} of size $r$ at $y$, by
\[P_2(y,r):=\{ y' \in Y: \pi_2(\tilde y'-\tilde y) \in r\Omega\}.\]
In other words, a type 1 patch consists of all points of $Y$
in a certain neighborhood of $y$ in $E$, while a type 2 patch consists of
the projections of all points of $\mc{S}$ whose first $d$ coordinates are
in a certain neighborhood of the first $d$ coordinates of $\tilde y$.
Type $1$ patches are more natural from the point of view of
working within $E$, but the behavior of type $2$ patches is more
closely tied to the Diophantine properties of $L$.

\begin{figure}[h]\label{fig:patches}
\centering
\def\svgwidth{.8\columnwidth}
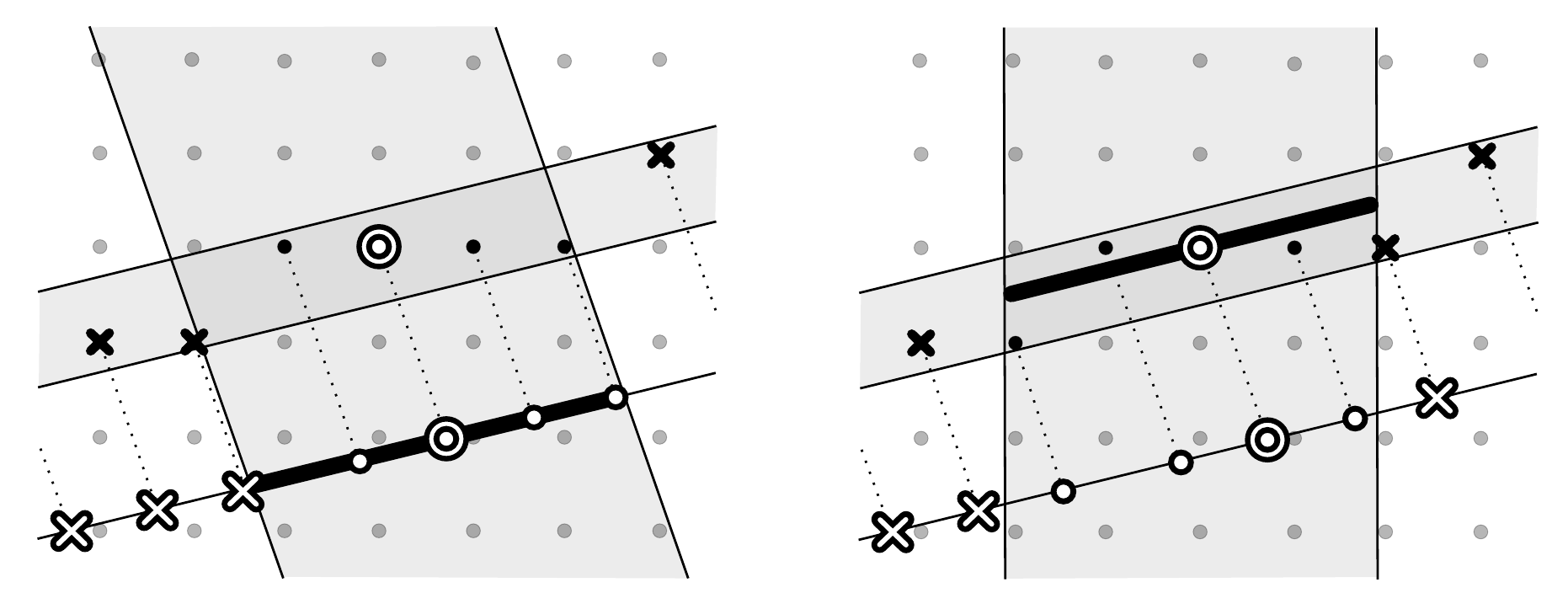
\caption{Definitions of patches of types $1$ and $2$. Points in the doubly shaded area correspond to the lifted patches. The (lifted) points of $Y$ which are not contained in the radius $r$ patch centered at $y$ are indicated with a cross. Notice that near the boundary of the patch the definitions differ.}
\end{figure}

Since the window $\mc{W}$ is compact,
type $1$ and $2$ patches of size $r$ at $y$ differ at
most within a constant neighborhood of the boundary of $r\Omega$.
It turns out to be substantially easier from a technical point
of view to work with type $2$ patches. The strategy of this paper is to
prove results for type $2$ patches and then estimate the error when converting
to type $1$ patches. The reader should note that our definitions
are slightly different than those appearing in the closely related
work \cite{Juli2010} of Antoine Julien, although the pointed patches
which he defines are similar to our type $2$ patches.

For $i=1$ or $2$ and $y_1,y_2\in Y$, we say that $P_i(y_1,r)$ and
$P_i(y_2,r)$ are equivalent if \[P_i(y_1,r)=P_i(y_2,r)+y_1-y_2.\] This
defines an equivalence relation on the collection of type $i$
patches of size $r$. We denote the equivalence class of such a patch
by $\mc{P}_i(y,r)$.

For each equivalence class $\mc{P}_i=\mc{P}_i(y,r)$, we define
$\xi_{\mc{P}_i}$, the {\bf frequency of $\mathbf{\mc{P}_i}$}, by
\[\xi_{\mc{P}_i}:=\lim_{R\rar\infty}\frac{\#\{y'\in Y:|y'|\le
R,~\mc{P}_i(y',r)=\mc{P}_i(y,r)\}}{\#\{y'\in Y:|y'|\le R\}}.\] It is
not difficult to show that, in our setup, the limit defining
$\xi_{\mc{P}_i}$ always exists. Finally, for each $r\ge 0$, we define
$\xi_i (r)$ to be the set of distinct values taken by
$\xi_{\mc{P}_i}$, as $\mc{P}_i$ runs over all equivalence classes of
type $i$ patches of size $r$. We call $\xi_i (r)$ the {\bf frequency
spectrum} of type $i$ patches of size $r$.

Without some hypotheses on the window
$\mc{W}$, the frequency spectrum can behave quite erratically. On the
other hand, in many applications it is typical to choose $\mc{W}$ to
be a nice region, for example a parallelotope that is the image under $\pi^*$ of the
convex hull of
a collection of vectors in $\Z^k$. The
machinery which we will develop in this paper is flexible enough to
say something for general regions $\mc{W}$. However, in order to prove
stronger results, which still apply to interesting known examples, we
specialize to regions $\mc{W}$ which are
parallelotopes with vertices in $F \cap \Z^k \simeq \Z^{k-d}$.

Our first two results give upper bounds for the frequency spectrum, which rely on various Diophantine approximation hypotheses on the subspaces $E$. Note, for comparison, that the number of patches of a given size always grows at least as fast as a constant times $r^d$.
\begin{theorem}\label{THM.FreqsUppBds1}
There is a set of linear maps $L:\R^d \to \R^{k-d}$ of Hausdorff dimension
$d(k-d)$ with the property that, if $E$ is the graph of $L$, and if $\mc{W}$ is a
$(k-d)$-dimensional parallelotope generated by vectors in $F\cap
\Z^k$, then there exists a constant $C$ such that
\[\#\xi_1(r),\#\xi_2(r)\le C ~\text{ for all }~ r\ge 0.\]
\end{theorem}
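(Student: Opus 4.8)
The plan is to prove the statement first for type $2$ patches, where the combinatorics are cleanest, and then to transfer the bound to type $1$ patches by a boundary comparison. By the ergodic considerations described above, the frequency of a type $2$ patch class $\mc{P}_2(y,r)$ equals the normalized volume of its \emph{acceptance domain}: the set of internal coordinates $z=\pi^*(\tilde y)\in\mc{W}$ producing that patch. First I would reduce to the model case in which $\mc{W}$ is the unit cube in $F\cong\R^{k-d}$, using that $\mc{W}$ is a parallelotope with vertices in $F\cap\Z^k$, so that an element of $GL_{k-d}(\Z)$ (together with a rational rescaling) carries it to an axis-parallel box while respecting the relevant lattice structure. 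Writing $A=(\alpha_{ij})$ and recalling that the internal coordinate of the lattice point over an offset $j\in\Z^d$ is governed by $Aj\bmod\Z^{k-d}$, the acceptance domains of type $2$ patches of size $r$ are exactly the cells of the arrangement of translated window facets $\partial\mc{W}-Aj$, with $j\in J_r:=\{j\in\Z^d:(j,Aj)\in r\Omega\}$. Because the facets of the box are axis-parallel, this is a product arrangement: in coordinate $i$ the circle $\R/\Z$ is partitioned by $\{\langle\alpha^{(i)},j\rangle\bmod 1:j\in J_r\}$, where $\alpha^{(i)}$ is the $i$-th row of $A$. Hence every acceptance-domain volume is a product $\ell_1\cdots\ell_{k-d}$ of coordinatewise gap lengths, and
\[
\#\xi_2(r)\ \le\ \prod_{i=1}^{k-d} g_i(r),
\]
where $g_i(r)$ is the number of distinct gaps in the circle partition determined by $\alpha^{(i)}$ and $J_r$. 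It therefore suffices to bound each $g_i(r)$ by a constant independent of $r$.

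This last step is a higher-dimensional three-distance (Steinhaus-type) problem, and it is the main obstacle. For a single row $\alpha=\alpha^{(i)}\in\R^d$, I would encode the gaps geometrically via the unimodular lattice $\Lambda_\alpha\subset\R^{d+1}$ spanned by the rows of $\left(\begin{smallmatrix} I_d & 0\\ \alpha & 1\end{smallmatrix}\right)$, whose points are $(m,\langle\alpha,m\rangle+p)$. A gap of the partition at scale $r$ corresponds to a difference vector $w$ lying in (roughly) $2\,r\Omega$ with $\langle\alpha,w\rangle\bmod 1$ small; after applying the diagonal flow $g_t=\mathrm{diag}(e^{-t},\dots,e^{-t},e^{dt})$ with $e^{t}\asymp r$, these gap-generating vectors become precisely the points of $g_t\Lambda_\alpha$ in a fixed bounded region of $\R^{d+1}$. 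The distinct gap lengths are, up to the usual three-distance bookkeeping, read off from the coordinates of these boundedly many points. The key point is then the Dani correspondence: $\alpha$ badly approximable as a linear form, i.e. $\inf_{m\neq 0}\|m\|^{d}\,\|\langle\alpha,m\rangle\|>0$, is equivalent to the orbit $\{g_t\Lambda_\alpha\}_{t\ge 0}$ being bounded in $SL_{d+1}(\R)/SL_{d+1}(\Z)$. On a compact set of lattices with a uniform lower bound on the shortest vector, Minkowski's theorem bounds the number of lattice points in any fixed region, so $g_i(r)$ is bounded uniformly in $r$. Making this rigorous — controlling which bounded-region lattice points actually realize gaps, uniformly along the whole orbit — is the technical heart of the argument.

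For the full-dimension conclusion I would invoke that the set $\mathrm{Bad}$ of badly approximable linear forms in $\R^d$ is Schmidt winning, hence of full Hausdorff dimension $d$. Imposing this condition on each of the $k-d$ rows independently cuts out the set of matrices all of whose rows lie in $\mathrm{Bad}$; this set is an intersection of $k-d$ coordinate-cylinder conditions, each of which is winning in $\R^{d(k-d)}$, and finite intersections of winning sets are winning (equivalently, one runs a product/Cantor construction preserving dimension). The resulting set of maps $L=A$ therefore has Hausdorff dimension $d(k-d)$, and for any such $L$ the product bound above gives $\#\xi_2(r)\le C$ for all $r$, with $C$ depending only on the Diophantine constants and on $\mc{W},\Omega$.

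Finally I would transfer the bound to type $1$ patches. Since $\mc{W}$ is compact, type $1$ and type $2$ patches of size $r$ at $y$ agree except within an $O(1)$-neighborhood of $\partial(r\Omega)$; concretely, a type $1$ patch of size $r$ is sandwiched between the type $2$ patches of sizes $r-c$ and $r+c$ for a fixed $c$. I would use this to express each type $1$ acceptance domain as a type $2$ cell modified along boundary offsets and argue that this introduces only boundedly many new distinct volumes, so that $\#\xi_1(r)$ is controlled by a constant multiple of $\max_{|r'-r|\le c}\#\xi_2(r')$. This comparison is the second place requiring care, because near $\partial(r\Omega)$ whether a point is included depends on the internal coordinate and the cuts are no longer axis-parallel; but the number of affected offsets and the resulting volume corrections remain controlled, and combined with the type $2$ bound this yields $\#\xi_1(r)\le C$ for all $r\ge 0$.
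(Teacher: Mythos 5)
Your architecture parallels the paper's own guiding principles (normalize the window, exploit the product structure of the cells in internal space, bound the number of distinct coordinate gaps using badly approximable rows, then transfer to type $1$ patches), but your opening reduction contains an error that propagates into a genuine gap in the dimension argument. It is not true that an element of $\mathrm{GL}_{k-d}(\Z)$ together with a rational rescaling carries every integer parallelotope to a box \emph{while preserving the lattice} $\Z^{k-d}$: for instance the parallelogram spanned by $(1,0)$ and $(1,2)$ admits no unimodular normalization (one is forced into half-integer entries), and once you allow general rational maps (as the paper does, applying $B^{-1}$ itself) the lattice becomes a finite-index extension $\Lambda=B^{-1}\Z^{k-d}$ and the slope matrix becomes $\beta=B^{-1}\alpha$, whose rows are rational combinations of the rows of $\alpha$. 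Badness is not preserved under such combinations ($\alpha^{(1)},\alpha^{(2)}\in\mc{B}_{d,1}$ does not make $\alpha^{(1)}-\alpha^{(2)}$ badly approximable), so the Diophantine condition genuinely depends on the window. Since Theorem \ref{THM.FreqsUppBds1} demands a single set of maps $L$ that works for \emph{every} integer parallelotope window, you must place $\alpha$ in $\bigcap_{B\in G}B(\cdot)$ over the countably many $B\in\mathrm{GL}_{k-d}(\Q)\cap M_{k-d}(\Z)$, i.e.\ in a countable intersection of images of your good set under linear maps with unbounded Lipschitz constants. Ordinary Schmidt winning does not survive this (the paper flags exactly this failure after property (W3)); the paper's fix is hyperplane absolute winning: $\mc{B}_{d,1}$ is HAW (Theorem \ref{THM.HAWForBad}), HAW passes to coordinate cylinders (Lemma \ref{LEM.HAWDirectSum}), and HAW is stable under $C^1$ images and countable intersections. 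Note also that even your claim that each row-cylinder $\mc{B}_{d,1}\oplus\R^{d(k-d-1)}$ is winning for the \emph{standard} Schmidt game is unjustified: lifting player A's ball moves through the projection produces slabs, not balls, which is precisely why the paper proves the cylinder property for the hyperplane game. For one fixed window you could instead get dimension $d(k-d)$ from the product structure $\mc{B}_{d,1}^{\,k-d}$, but that weaker statement is all your proposal delivers.

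Two further steps need more than you give them. First, your identification of type $2$ acceptance domains with \emph{single} cells of the facet arrangement is true but is exactly the content of Lemma \ref{LEM.FreqsVols2}, proved using convexity of $\mc{W}$ together with the fact that an integer parallelotope is a strict fundamental domain for a finite-index sublattice; the paper points out that the analogous claim in \cite{Juli2010} fails in general, so it cannot be assumed. Second, in the transfer to type $1$ patches, your assertion that "the volume corrections remain controlled" is precisely where the badly approximable hypothesis must be invoked a second time: compactness of $\mc{W}$ gives only the sandwich $P_2(y,r-c_2)\subseteq P_1(y,r)\subseteq P_2(y,r+c_2)$, and without Diophantine input the number of cells of $\mathrm{reg}_2(r+c_2)$ inside a cell of $\mathrm{reg}_2(r-c_2)$ is unbounded (this is what breaks for a.e.\ $\alpha$ in the discussion preceding Lemma \ref{LEM.Reg1->Reg2ForAE}). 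The paper's Lemma \ref{LEM.Reg1->Reg2ForBad} handles this by combining the transference theorem (Theorem \ref{THM.TransferTheorem}), which shows each cell of $\mathrm{reg}_2'(r)$ has sides $O(r^{-d})$, with badness to bound the number of new singular hyperplanes crossing any such cell, so that each type $1$ frequency is a sum of at most $c_1$ volumes drawn from the $c_4$ values of Lemma \ref{LEM.VolumeBdForBAD}. Your homogeneous-dynamics route to the bounded-gap-count statement (Dani correspondence, bounded $g_t$-orbits, Minkowski on a compact part of the space of lattices) is a plausible substitute for the paper's transference-based Lemmas \ref{LEM.Reg1->Reg2ForBad} and \ref{LEM.VolumeBdForBAD}, but you leave it unexecuted at the step you yourself identify as the technical heart, so as it stands the proposal establishes neither the uniform gap bound nor the full-dimension, all-windows conclusion.
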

The Diophantine approximation condition that we will use to prove Theorem \ref{THM.FreqsUppBds1} is a `badly approximable' condition, defined below, which applies to a set of numbers of full Hausdorff dimension but zero Lebesgue measure. If we instead use a condition which applies almost everywhere (in the sense of Lebesgue measure), we are able to obtain the following result.
\begin{theorem}\label{THM.FreqsUppBds2}
There is a set of linear maps $L:\R^d \to \R^{k-d}$ of full Lebesgue measure with the
property that, if $E$ is the graph of $L$, and if
$\mc{W}$ is a $(k-d)$-dimensional
parallelotope generated by vectors in $F\cap \Z^k$, then for any
$\epsilon>0$ and for all sufficiently large $r$, \[\#\xi_2(r)\le (\log
r)^{(1+\epsilon)(d+1)(k-d)}.\] Furthermore if $\Omega$ is the set of points
in $E$ whose first $d$ coordinates lie
in a hypercube centered at the origin
(e.g., points with $-1<x_i<1$ for $i=1,2, \ldots, d$)
then, for all sufficiently large $r$, \[\#\xi_1(r)\le
(\log r)^{(1+\epsilon)(d+1)(k-d)}.\] \end{theorem}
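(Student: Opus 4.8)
The plan is to pass from type~$2$ patch frequencies to volumes of cells in a subdivision of the window, to bound the number of distinct cell volumes by a product of $k-d$ one-dimensional gap counts, and finally to feed each gap count into a metric Diophantine estimate.

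\textbf{Reduction to cell volumes.} By the measurable conjugacy of the tiling dynamics to the $E$-action on $\R^k/\Z^k$ with Lebesgue measure, the frequency of a type~$2$ patch class equals the normalized Lebesgue measure (in $F$) of the set of window points $w=\pi^*(\tilde y)$ producing it; this is because the points $\pi^*(\tilde y')$ equidistribute in $\mc{W}$ and the defining ratio in the frequency converges to this measure. For fixed $r$ the type~$2$ patch at $\tilde y$ is determined by which of the finitely many translates $\mc{W}-\pi^*(n)$ contain $w$, where $n$ ranges over the difference vectors with $\pi_2(n)\in r\Omega$ and $\pi^*(n)\in \mc{W}-\mc{W}$. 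Hence the class is constant on the cells cut out in $\mc{W}$ by the boundaries $\partial(\mc{W}-\pi^*(n))$, and $\#\xi_2(r)$ is at most the number of distinct cell volumes. Writing $\mc{W}$ in the coordinates given by its integer generators $v_1,\dots,v_{k-d}$ (columns of an integer matrix $V$), the window becomes the unit cube and every translate is axis aligned, so the arrangement is a rectangular grid and each cell is a box of volume $|\det V|$ times the product of its $k-d$ side lengths. Thus $\#\xi_2(r)\le \prod_{i=1}^{k-d} g_i(r)$, where $g_i(r)$ counts the distinct gap lengths of the hyperplanes normal to the $i$-th coordinate. Since $\pi^*(n)=n''-Ln'$, these cut positions are, up to boundedly many rational shifts of denominator dividing $\det V$ coming from $n''\in\Z^{k-d}$, the fractional parts $\{\beta_i\cdot n' \bmod 1\}$, where $\beta_i\in\R^d$ is the $i$-th row of $V^{-1}L$ and $n'$ ranges over an $\asymp r$-sized region of $\Z^d$. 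So $g_i(r)$ is, up to a bounded factor, the number of distinct gaps of a one-dimensional Kronecker point set with slope vector $\beta_i$.

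\textbf{The metric estimate.} The crux is the statement that for Lebesgue-a.e.\ $\beta\in\R^d$ and every $\epsilon>0$, the number of distinct gaps of $\{\beta\cdot n \bmod 1 : n\in\{0,\dots,N\}^d\}$ is at most $(\log N)^{(1+\epsilon)(d+1)}$ for all large $N$. I would prove this in two stages. First, a multidimensional three-distance argument shows that every distinct gap is produced by a \emph{minimal} difference vector $m\in[-N,N]^d\cap\Z^d$, one for which $\|\beta\cdot m\|$ is not decreased by any nonzero integer vector dominated by $m$; this bounds $g_i(r)$ by the number of such minimal vectors of height $\le \asymp r$. Second, these minimal vectors are the relevant lattice points on the sail of the unimodular lattice $\Lambda_\beta=\{(m,\beta\cdot m-p):m\in\Z^d,\ p\in\Z\}\subset\R^{d+1}$, and their number up to height $N$ is controlled, via the geometry of numbers and a Borel--Cantelli argument over scales $t\asymp\log N$, by excursions of the associated diagonal-flow trajectory; for a.e.\ $\beta$ this count is at most $(\log N)^{(1+\epsilon)(d+1)}$, the exponent reflecting the ambient dimension $d+1$ together with the $d$-dimensional antichain structure of the minimal vectors. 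Because $L\mapsto(\text{rows of }V^{-1}L)$ carries null sets to null sets, intersecting the resulting full-measure sets over the $k-d$ rows and over the countably many integer matrices $V$ produces a single full-measure set of maps $L$ valid for \emph{every} integer parallelotope window. Multiplying over the $k-d$ coordinate directions and reabsorbing $\epsilon$ gives $\#\xi_2(r)\le(\log r)^{(1+\epsilon)(d+1)(k-d)}$.

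\textbf{Passing to type~$1$.} For the $\#\xi_1(r)$ bound with $\Omega$ a hypercube, I would use that type~$1$ and type~$2$ patches of size $r$ differ only for difference vectors $n$ within an $O(1)$-collar of $\partial(r\Omega)$, the discrepancy arising from the bounded difference $\pi_1(n)-\pi_2(n)$ on the strip. Deciding which boundary points belong to the type~$1$ patch introduces extra cuts in the window, but since $r\Omega$ is a box with faces normal to the coordinate directions, these cuts are again axis aligned with positions governed by the same slope vectors $\beta_i$; they therefore enlarge each $g_i(r)$ by at most a bounded factor, leaving the $\log$-power bound unchanged. This is exactly where the hypercube hypothesis enters: for a general convex $\Omega$ the type~$1$ boundary is not compatible with the rectangular product structure, and the decoupling into $k-d$ independent one-dimensional gap problems fails.

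\textbf{Main obstacle.} The hard step is the metric estimate of the second paragraph. The reduction of distinct gaps to a count of minimal best-approximation vectors is a clean but genuinely multidimensional three-distance statement, and extracting the precise power $(d+1)$ of $\log N$ for almost every $\beta$ requires quantitative control of the sail of $\Lambda_\beta$ combined with a Khintchine--Borel--Cantelli input. By contrast, the passage from frequencies to cell volumes, the product structure forced by the integer parallelotope window, and the type~$1$ correction are comparatively routine.
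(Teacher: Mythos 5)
Your architecture matches the paper's at the top level: reduce type~$2$ frequencies to volumes of the rectangular cells cut in the (straightened) window (the paper's Lemma~\ref{LEM.FreqsVols2}), exploit the product structure to decouple into $k-d$ one-dimensional gap counts in the forms $L_i'$, and intersect full-measure sets over all integer matrices $B$ (your $V$) at the end. But the step you yourself flag as the main obstacle --- the a.e.\ bound $(\log N)^{(1+\epsilon)(d+1)}$ on the number of distinct gaps of $\{\beta\cdot n \bmod 1\}$ --- is left as a sketch routed through machinery (minimal best-approximation vectors, sails of $\Lambda_\beta$, diagonal-flow excursions) that you do not carry out, and none of it is needed. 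The paper obtains the per-coordinate count $\psi(r)^{-(d+1)}$ with $\psi(r)=(\log r)^{-1-\epsilon}$ from two elementary inputs: (a) the convergence Borel--Cantelli lemma gives, for a.e.\ $\beta$, $\|\beta\cdot n\|\ge \psi(|n|)/|n|^d$, so the points $\{\beta\cdot n \bmod m_i^{-1}\}$ with $|n|\le cr$ are pairwise separated by $\gtrsim \psi(r)/r^d$; (b) Cassels's transference theorem (Theorem~\ref{THM.TransferTheorem}) converts that same homogeneous lower bound into an \emph{inhomogeneous} density statement, so every gap has length $\lesssim (r\psi(r))^{-d}$. Since each distinct gap length has the form $a+L_i'(m)$ for a difference vector $m$ with $L_i'(m) \bmod m_i^{-1}$ confined to an interval of length $\lesssim (r\psi(r))^{-d}$, the density/separation ratio pigeonholes the count to $\psi(r)^{-(d+1)}$ (Lemmas~\ref{LEM.VolumeBdForBAD} and~\ref{LEM.VolumeBdForAE}). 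Your proposal contains neither the transference step nor any substitute for it; as written, the crux of your argument is an assertion, not a proof, and the exponent $(d+1)$ is justified only heuristically.

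The type~$1$ step has a more substantive error. You treat the passage from $\xi_2$ to $\xi_1$ as merely adding $O(1)$-many axis-aligned cuts per direction, implicitly assuming that distinct cells still yield distinct patches. For type~$1$ patches this injectivity fails --- two components of the regular set can carry the same type~$1$ patch, which is exactly why Lemma~\ref{LEM.FreqsVols2} is restricted to type~$2$ (see the paper's discussion following Lemma~\ref{LEM.FreqsVols1} and at the end of Section~3). Consequently an element of $\xi_1(r)$ is a \emph{sum} of cell volumes, and without controlling the number of summands the naive count is $R(r)^{R(r)}$, worse than trivial. The hypercube hypothesis on $\Omega$ enters not through alignment of the new cuts but combinatorially: among the lattice points $n$ in the sup-norm annulus $r\le |n|<r+2c_2$, any sufficiently large family contains two sharing a coordinate $n_{j'}=n'_{j'}$, so their difference lies in a coordinate hyperplane of $\Z^d$; one then needs a \emph{second} a.e.\ Diophantine hypothesis --- lower bounds (\ref{EQN.aeHyp2}) on the degenerate $(d-1)$-variable subforms $\bigl\|\sum_{j\neq j'} n_j\beta_{ij}\bigr\|$, obtained by a separate Borel--Cantelli application --- to conclude that only boundedly many annulus hyperplanes cross each cell, whence each $\xi_1$ frequency is a sum of at most $c_1$ cell volumes (Lemma~\ref{LEM.Reg1->Reg2ForAE}). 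Your proposal never invokes any bound on these subforms, and your ``bounded factor'' claim is precisely the statement that fails without them.
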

It is worth mentioning that, in the special case when $d=1$, our proofs of Theorems \ref{THM.FreqsUppBds1} and \ref{THM.FreqsUppBds2} can be simplified and, regardless of Diophantine properties, we can obtain uniform bounds for $\#\xi_1(r)$ and $\#\xi_2(r)$. On the other hand we will prove, using a modification of an argument from \cite{BlehHommJiRoedShen2012}, that for any $k$ and $d$ with $d\ge 2$, there is a large collection of subspaces for which the upper bounds in our previous theorems cannot be obtained.
\begin{theorem} \label{THM.FreqsLowBds}
If $d\ge 2$ then there is a set of linear maps $L:\R^d \to \R^{k-d}$ of Hausdorff dimension
$d(k-d)+(1/d-1)$, with the property that, if $E$ is the graph of $L$, we can choose a region $\Omega$ which is a polytope, and for which there exists an $\epsilon>0$ such that
\[\limsup_{r\rar\infty}\frac{\# \xi_2(r)}{r^\epsilon}=\infty.\]
\end{theorem}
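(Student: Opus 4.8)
The plan is to pass, via the correspondence between patch frequencies and volumes of acceptance domains in the window, from counting frequencies to counting distinct gap lengths of a $d$-generator Kronecker sequence, and then to engineer the first row of $L$ so that this count grows along a sparse sequence of scales. Recall that two positions in $\mc{W}$ give the same type $2$ patch of size $r$ precisely when they lie in the same cell of the arrangement cut out of $\mc{W}$ by the translated boundary hyperplanes $\partial\mc{W} - \pi^*(v)$, where $v$ ranges over the integer vectors whose $E$-part lies in $r\Omega$; the frequency of a patch is the volume of its cell, so $\#\xi_2(r)$ is exactly the number of distinct cell volumes. Writing $v=(a,b)$ with $a\in\Z^d$, $b\in\Z^{k-d}$, one has $\pi^*(v)=(0,b-La)$, and since $\mc{W}$ is a box the cutting hyperplanes are axis-aligned; hence the arrangement is a product grid, and in the $i$-th coordinate of $F$ the cut positions are the reductions mod $1$ of $\{\sum_j \alpha_{ij}a_j\}$. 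Taking $\Omega$ to be a hypercube as in Theorem \ref{THM.FreqsUppBds2}, I would fix one transverse cell in coordinates $2,\ldots,k-d$, of volume $g^{*}>0$; the cells lying above it have volumes $g^{*}\cdot g$, where $g$ runs over the gap lengths of the one-dimensional point set
\[X_1(r) := \Big\{ \sum_{j=1}^{d}\alpha_{1j}a_j \bmod 1 : a \in \Z^d \cap r\Omega_0 \Big\},\]
with $\Omega_0$ the projection of $\Omega$ to the first $d$ coordinates. Since $g\mapsto g^{*}g$ is injective, $\#\xi_2(r)$ is at least the number of distinct gaps of $X_1(r)$, and the theorem reduces to producing a first row $(\alpha_{11},\ldots,\alpha_{1d})$ for which this number exceeds $r^{\epsilon}$ for infinitely many $r$, the remaining $d(k-d)-1$ entries of $L$ being left free.

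For the construction I would adapt the analysis of nearest-neighbour spacings of multidimensional Kronecker sequences in \cite{BlehHommJiRoedShen2012}. The key is to make $\alpha_{11}$ extremely well approximable while keeping a second generator, say $\alpha_{12}$, generic; this is exactly the feature absent when $d=1$, where the three-gap theorem forces at most three gaps and no growth is possible. Concretely, choose $\alpha_{11}$ so that $|\alpha_{11}-p_\ell/q_\ell|<q_\ell^{-2d}$ along a sequence $q_\ell\to\infty$, and work at the scales $r\asymp q_\ell$. For $a$ supported on the first coordinate with $a_1<q_\ell$, the points $\{a_1\alpha_{11}\bmod 1\}$ collapse onto the uniform grid $\{i/q_\ell\}$ up to a perturbation of size $\lesssim q_\ell^{1-2d}\ll q_\ell^{-1}$; superimposing the equidistributed translates generated by $a_2\alpha_{12}$ (and the remaining free generators) produces a point set with structure on several separated scales, and the interaction of the grid of spacing $\asymp q_\ell^{-1}$ with these finer perturbations forces a number of distinct gap lengths bounded below by a fixed positive power of $q_\ell$. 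Establishing this last lower bound quantitatively, with an exponent $\epsilon>0$ depending only on $d$, is the heart of the argument and is where I expect the main difficulty: one must verify not merely that new gaps appear but that their number is $\gg q_\ell^{\epsilon}$, that this holds uniformly over the relevant family of free parameters, and that the gaps remain distinct after accounting for the bounded discrepancy between type $2$ patches and the literal cut positions near $\partial\mc{W}$.

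Finally I would compute the Hausdorff dimension of the resulting set of linear maps. The admissible $\alpha_{11}$ form the set
\[K_{2d} := \{x\in\R : |x-p/q|<q^{-2d} \text{ for infinitely many rationals } p/q\},\]
which by the Jarn\'{\i}k--Besicovitch theorem has $\dim_H K_{2d}=2/(2d)=1/d$. The set of admissible maps contains $K_{2d}$ in the $\alpha_{11}$-coordinate times a full-measure, full-dimensional Diophantine set in the remaining $d(k-d)-1$ coordinates, the mild genericity on the free generators being exactly what the mechanism of the previous step requires. Since the standard product-dimension estimates show that adjoining such a full-dimensional Euclidean factor adds its dimension (the upper bound coming from its box dimension $d(k-d)-1$, the lower bound from superadditivity of Hausdorff dimension under products), this product has $\dim_H$ equal to $\tfrac1d+\big(d(k-d)-1\big)=d(k-d)+\tfrac1d-1$, as required. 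The only subtlety remaining is that the gap-proliferation must be verified throughout this full-dimensional family rather than for a single map, which is why the uniformity in the free parameters flagged above is essential.
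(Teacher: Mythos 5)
Your reduction is the same as the paper's: for the integer-box window, Lemma \ref{LEM.FreqsVols2} gives that $\#\xi_2(r)$ equals the number of distinct cell volumes, the arrangement is a product grid, and fixing a transverse cell bounds $\#\xi_2(r)$ below by the number of distinct gaps of the one-dimensional projected sequence (the paper's $\mc{A}(r)$). But the heart of the theorem --- the analogue of the paper's Lemma \ref{LEM.InfGaps}, which you explicitly defer as ``the main difficulty'' --- is exactly what is missing, and the mechanism you sketch for it differs from the paper's in ways that matter. The paper does not make $\alpha_{11}$ alone well approximable with the rest generic; it imposes a \emph{resonance between two generators}, $\alpha_d/\alpha_1\in\mc{E}(q^{-(2d-1+\epsilon)})$, together with a \emph{badly approximable} condition $(\alpha_1,\ldots,\alpha_{d-1})\in\mc{B}_{d-1,1}$ (a null set, not a full-measure one --- this is what gives the clean coarse separation $c/(r^{d-1}q^d)$ that confines the fine perturbations $n_d\gamma$ inside coarse gaps at the scale $r\asymp q^{1+\epsilon/d}$), plus $\Q$-linear independence to make the resulting gap values $\Delta_k$ distinct. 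Most importantly, the paper does \emph{not} use a hypercube: it takes $\Omega'=\{x\in(0,1]^d:x_d+2x_1\le 1\}$, whose sloped boundary makes the admissible range of $n_d$ vary with $n_1$, so that the fiber-endpoint differences satisfy $(k-1)q\le B_{i_k}-b_{i_k}<kq$ and sweep out $\asymp r/q$ distinct values as $k$ ranges up to $r/q$. With your hypercube the fiber maxima are essentially constant (about $r$, up to a residue class mod $q$), this supply of distinct boundary gaps collapses, and it is not at all clear the cube works; the theorem's phrasing ``we can choose a region $\Omega$ which is a polytope'' reflects precisely this freedom in the construction, which your proposal gives away.

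There is also a concrete quantitative error in your Diophantine condition: the exponent $q^{-2d}$ is critical and produces \emph{no} gap growth. Confinement of the fine scale inside the coarse gaps requires $r\,|\delta|\lesssim r\,q^{-2d}$ to be smaller than the coarse separation $\asymp(r^{d-1}q^d)^{-1}$, which forces $r\lesssim q$, while the number of distinct gaps the mechanism manufactures is $\asymp r/q=O(1)$. To get growth you need approximation of quality $q^{-(2d+\epsilon)}$ for some fixed $\epsilon>0$, yielding $\asymp r^{\epsilon/(d+\epsilon)}$ distinct gaps but a Jarn\'{\i}k--Besicovitch dimension of only $2/(2d+\epsilon)<1/d$ for that coordinate; the dimension $1/d$ in Theorem \ref{THM.FreqsLowBds} is recovered only by taking the union over all $\epsilon>0$ at the very end, as the paper does (each point in the union carrying its own $\epsilon$). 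Your single-exponent bookkeeping, $\dim K_{2d}=1/d$ with growth throughout $K_{2d}$, is internally inconsistent: the dimension-carrying part of $K_{2d}$ consists of points whose approximations are of quality roughly $q^{-2d}$ and no better, which is exactly the regime where the gap count stays bounded. Fixing this, replacing the hypercube by a suitably sloped polytope, and upgrading ``generic'' to the badly approximable condition (or carefully absorbing the logarithmic losses of an almost-everywhere Khintchine--Groshev condition) would bring your outline in line with a complete proof.
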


Finally, as an interesting sidenote, which complements the results above, we point out that the connection described in the last paragraph of Section \ref{SEC.Overview} can be used give a `topological' proof of the Steinhaus Theorem (as well as other classical Steinhaus-type results). We present such a proof in the appendix to this paper.

\subsection{Summary of notation} For $x\in\R,~\{x\}$ denotes the fractional part of $x$ and
$\| x \|$ denotes the distance from $x$ to the nearest integer. For
$x\in\R^m$ (with $m=d$, $k$, or $k-d$),
$|x|=\max\{|x_1|,\ldots ,|x_m|\}$ and
$\|x\|=\max\{\|x_1\|,\ldots ,\|x_m\|\}.$
We use $\dim (A)$ to
denote the Hausdorff dimension of a set $A$. For a measurable subset
$K\subseteq\R^m$, we write $|K|$ to denote its $m$-dimensional
Lebesgue measure, unless otherwise specified.

\section{Results from Diophantine
approximation}\label{SEC.Diophantine}
Let
$L$ be a linear map $\R^d \to \R^{k-d}$ given by a matrix with entries
$\{\alpha_{ij}\} \in [0,1)^{d(k-d)}$.
Given a function $\psi:\N\rar [0,\infty)$,
let $\mc{E}(\psi)$ denote the collection of points $\alpha\in
[0,1)^{d(k-d)}$ for which the inequality
\[ \| L(n)\| \le \psi(|n|)\] is satisfied by infinitely many
$n\in\Z^d$. Note that $\|L(n)\|$ would not change if we added integers to
the coefficients $\alpha_{ij}$, which is why we can restrict $\alpha$
to $[0,1)^{d(k-d)}$ without losing generality.

For any $N\in\N,$ there exists an $n\in\Z^d$
with $|n|<N$ and
\begin{equation}\label{EQN.DirichLinForms}
\| L(n) \| \le\frac{1}{N^{d/(k-d)}}.
\end{equation}
This is a
multidimensional analogue of Dirichlet's Theorem, which follows from a straightforward application of the pigeonhole principle. It can also be
stated by saying that $\mc{E}(\psi)=[0,1)^{d(k-d)}$ when
\[\psi(m)=m^{-d/(k-d)}.\] We are interested in having an inhomogeneous
version of this result, requiring the values taken by
$\|L(n)-\gamma\|$ to be small, for all choices of
$\gamma\in\R^{k-d}$. This is not quite as straightforward as it may
seem, and in fact extra hypotheses on the numbers $(\alpha_{ij})$ are
necessary in order to obtain such results. However these types of results are dealt with nicely by the following
`transference theorem,' a proof of which can be found in \cite[Chapter
V, Section 4]{Cass1957}.
\begin{theorem}\cite[Section V, Theorem
VI]{Cass1957}.\label{THM.TransferTheorem} Suppose that, for $C,X>0$,
there are no solutions $n\in\Z^k\setminus\{0\}$ to the equations
\[ \|L(n)\|\le C\quad\text{and}\quad |n|\le X.\]
Then for all $\gamma\in\R^{k-d}$, there is an $n\in\Z^k$ satisfying
\[ \|L(n)-\gamma\|\le C_1\quad\text{and}\quad
|n|\le X_1,\] where \[C_1=\frac{1}{2}(h+1)C,\quad
X_1=\frac{1}{2}(h+1)X,\quad \text{and}\quad h=\lfloor
X^{-d}C^{d-k}\rfloor.\]
\end{theorem}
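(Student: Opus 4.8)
The plan is to recast the statement as a covering-radius bound for a unimodular lattice. Writing $A$ for the matrix of $L$, I would work with the lattice
\[ \Lambda = \{(n,\, L(n)+m) : n\in\Z^d,\ m\in\Z^{k-d}\}\subset\R^k, \]
which has determinant $1$ and lives in $\R^d\times\R^{k-d}$. With the symmetric box $B = \{(x,y) : |x|\le X,\ |y|\le C\}$, the homogeneous hypothesis says exactly that $B$ contains no nonzero point of $\Lambda$, while the desired conclusion, applied to the translate $(0,\gamma)$, is an instance of the assertion that every translate of $\tfrac12(h+1)B$ meets $\Lambda$; that is, that the covering radius of $\Lambda$ with respect to $B$ is at most $\tfrac12(h+1)$. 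A lattice point in $(0,\gamma)+\tfrac12(h+1)B$ has the form $(n, L(n)+m)$ with $|n|\le\tfrac12(h+1)X=X_1$ and $\|L(n)-\gamma\|\le|L(n)+m-\gamma|\le\tfrac12(h+1)C=C_1$, so bounding the covering radius is exactly what is needed.

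First I would extract quantitative information from the hypothesis using the geometry of numbers. Let $\lambda_1\le\cdots\le\lambda_k$ be the successive minima of $B$ with respect to $\Lambda$. Since the closed box $B$ contains no nonzero lattice point, $\lambda_1>1$, and hence every $\lambda_i>1$. Minkowski's second theorem gives $\lambda_1\cdots\lambda_k\le 2^k/\operatorname{vol}(B)$, and since $\operatorname{vol}(B)=2^k X^d C^{k-d}$ this reads $\prod_i\lambda_i\le X^{-d}C^{d-k}$, a quantity whose integer part is precisely $h$. Combined with $\lambda_i>1$ for all $i$, this forces $\lambda_k<h+1$, so all the minima lie in the interval $(1,h+1)$.

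Next I would convert control of the successive minima into control of the covering radius. The standard Jarník-type inequality $\mu(\Lambda,B)\le\tfrac12\sum_{i=1}^k\lambda_i$, together with the constraints just obtained, bounds $\mu$ by roughly $\tfrac12(h+1)$; feeding the lattice point guaranteed by this covering bound back through the definition of $\Lambda$ then recovers the integer $n$ with $|n|\le X_1$ and $\|L(n)-\gamma\|\le C_1$, as in the first paragraph.

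The main obstacle is obtaining the constant in the clean form $\tfrac12(h+1)$ rather than $\tfrac12(h+O(k))$: the generic covering bound above loses an additive term of size about $(k-1)/2$, because the extremal configuration concentrates the product $\prod_i\lambda_i$ in a single minimum while leaving the remaining $k-1$ minima just above $1$, so that $\sum_i\lambda_i$ can be as large as roughly $h+k$. Sharpening this to the stated constant requires a more hands-on argument adapted to the specific geometry of the box --- either a Mahler-type duality between $B$ and its polar cross-polytope (equivalently, passing to the transposed system on the dual lattice $\Lambda^*$), or a direct inductive construction that peels off one coordinate direction at a time while exploiting $\lambda_1>1$ at each stage. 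Disposing of the floor function and the distinction between the open and closed box along the boundary of $B$ is then routine bookkeeping. This sharp bookkeeping is precisely what Cassels' treatment carries out, which is why we quote it rather than reproduce it.
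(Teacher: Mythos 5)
The paper never proves this theorem: it is imported from Cassels, with the text explicitly pointing to \cite[Chapter V, Section 4]{Cass1957} for the proof, so your attempt can only be measured against the cited source and against the statement itself. Your setup is correct and is the natural one. The lattice $\Lambda=\{(n,L(n)+m)\}$ is unimodular; in the only case that needs an argument, $C<1$, the homogeneous hypothesis is indeed equivalent to $B\cap\Lambda=\{0\}$ (lattice points with $n=0$ have second component of norm at least $1>C$), while for $C\ge 1$ the conclusion holds trivially with $n=0$, since $\|\gamma\|\le\tfrac12\le\tfrac12(h+1)C$. Compactness of the closed box upgrades $\lambda_1\ge 1$ to $\lambda_1>1$, and Minkowski's second theorem gives $\lambda_1\cdots\lambda_k\le X^{-d}C^{d-k}<h+1$, exactly as you say.

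The genuine gap is the one you flag yourself, and it is more than ``sharp bookkeeping.'' The step $\mu\le\tfrac12\sum_i\lambda_i$ cannot be pushed below $\tfrac12(h+k)$ within your framework: the derived constraints $\lambda_i>1$ and $\prod_i\lambda_i<h+1$ permit $\sum_i\lambda_i$ arbitrarily close to $h+k$, so no further manipulation of the successive minima alone recovers $\tfrac12(h+1)$. Neither of your proposed rescues clearly closes this: Mahler-type duality relates the covering radius to the first minimum of the dual system only up to dimensional constants, so it reintroduces the same $O(k)$ loss, and the coordinate-peeling induction is not sketched in enough detail to assess. The clean constant comes from using the volume hypothesis directly rather than through the lossy intermediary of the minima: the defining property $h+1>2^{k}/\operatorname{vol}(B)$ says precisely that $h+1$ translates of $\tfrac12 B$ have total measure exceeding $\det\Lambda=1$, forcing an overlap modulo $\Lambda$, which is the kind of input a Blichfeldt-type pigeonhole argument in $\R^k/\Lambda$ exploits; it is by working at this level, not through successive minima, that Cassels obtains the constant. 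As written, then, you have proved the theorem with $\tfrac12(h+k)$ in place of $\tfrac12(h+1)$ and deferred the stated constant back to the very reference being proved --- which, in fairness, is also all the paper does. It is worth noting that your weaker constant would in fact suffice for every application in this paper: Lemmas \ref{LEM.Reg1->Reg2ForBad} and \ref{LEM.Reg1->Reg2ForAE} invoke the theorem one linear form at a time (so with $k-d=1$), where the discrepancy is the bounded factor $(h+k)/(h+1)\le k$ and is absorbed into the constants there denoted $c_1$ and $c_3$.
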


Next we investigate
how much equation (\ref{EQN.DirichLinForms}) can be improved.
Of course, the
answer depends on the choice of $\alpha\in\R^{d(k-d)}.$ It follows
easily from the Borel-Cantelli Lemma that if
\[\sum_{m\in\N}m^{d-1}\psi (m)^{k-d}<\infty,\] then the Lebesgue
measure of $\mc{E}(\psi)$ is $0$. If $\psi$ is monotonic,
then the following theorem indicates
that the converse also holds.
\begin{theorem}[Khintchine-Groshev Theorem]\label{THM.KhinGros} If
$\psi$ is monotonic and if \[\sum_{m\in\N}m^{d-1}\psi
(m)^{k-d}=\infty,\] then the Lebesgue measure of $\mc{E}(\psi)$ is
$1$.
\end{theorem}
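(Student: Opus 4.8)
The stated result is the divergence half of the Khintchine--Groshev theorem; the convergence half is exactly the Borel--Cantelli observation recorded just before the statement. The plan is to realize $\mc{E}(\psi)$ as a $\limsup$ set and run a second-moment (divergence Borel--Cantelli) argument. Writing $A_n=\{\alpha\in[0,1)^{d(k-d)}:\|L(n)\|\le\psi(|n|)\}$ for $n\in\Z^d\setminus\{0\}$, we have $\mc{E}(\psi)=\limsup_{|n|\to\infty}A_n$. The first step is to compute $|A_n|$. The condition $\|L(n)\|\le\psi(|n|)$ asks each coordinate $\|\sum_j\alpha_{ij}n_j\|\le\psi(|n|)$ to hold, and since the rows $i=1,\dots,k-d$ of the matrix $(\alpha_{ij})$ involve pairwise disjoint blocks of the coordinates $\alpha$, the event factorizes over $i$. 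For a single row the form $\sum_j\alpha_{ij}n_j\bmod 1$ is uniformly distributed on $[0,1)$ whenever $n\neq 0$, so that factor has measure $2\psi(|n|)$ once $\psi(|n|)<1/2$; hence $|A_n|=(2\psi(|n|))^{k-d}$ for all large $|n|$. Since $\#\{n:|n|=m\}\gg m^{d-1}$, the first-moment sum satisfies $\sum_n|A_n|\gg\sum_m m^{d-1}\psi(m)^{k-d}$, which diverges by hypothesis.

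Second, to conclude that the $\limsup$ set has positive measure one needs quasi-independence on average, i.e.\ a bound of the form
\[\sum_{|n|,|n'|\le N}|A_n\cap A_{n'}|\ll\Big(\sum_{|n|\le N}|A_n|\Big)^2+\sum_{|n|\le N}|A_n|.\]
Here again the product structure helps: $|A_n\cap A_{n'}|$ factorizes over rows, and for each row one must estimate the $[0,1)^d$-measure of the set where the two linear forms $\sum_j\alpha_{ij}n_j$ and $\sum_j\alpha_{ij}n'_j$ are simultaneously within $\psi$ of integers. When $n$ and $n'$ are linearly independent the pair of forms equidistributes on $\mathbb{T}^2$ and this measure is comparable to the product $(2\psi(|n|))(2\psi(|n'|))$, contributing the main term $(\sum|A_n|)^2$. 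The main obstacle is the \emph{aligned} pairs, where $n'$ is (nearly) a rational multiple of $n$: there the two forms are dependent, the joint measure is governed by the larger of the two radii, and overlaps can greatly exceed the product of the individual measures. One disposes of these by grouping $n,n'$ along each primitive direction and summing the resulting series, and it is precisely here that the monotonicity of $\psi$ is indispensable---it controls the count and the sizes of the aligned contributions and bounds their total by $O(\sum_n|A_n|)$. (Monotonicity cannot be dropped: without it the conclusion can fail, as in Duffin--Schaeffer-type phenomena.) Feeding these estimates into the Paley--Zygmund/Chebyshev inequality yields $|\mc{E}(\psi)|>0$.

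Finally, positivity is upgraded to full measure by a zero--one law for such Diophantine $\limsup$ sets (the classical case $d=k-d=1$ is Cassels' or Gallagher's zero--one law, with the same argument valid for systems of forms): the measure of $\mc{E}(\psi)$ is forced to be $0$ or $1$, and since it is positive it equals $1$. Alternatively one runs the second-moment estimate of the previous paragraph localized to an arbitrary box, obtaining a uniform density lower bound $|\mc{E}(\psi)\cap B|\ge c\,|B|$, and then invokes the Lebesgue density theorem to rule out a positive-measure complement. I expect the correlation estimate for aligned pairs to be the genuine difficulty; everything else is bookkeeping built on the factorization over rows and the Dirichlet-type counting already available in this section.
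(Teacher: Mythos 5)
The paper does not prove this statement at all: it is quoted as a known classical result, with the proof deferred to \cite{BereDickVela2006}, where it is obtained from the general machinery of ubiquitous systems (one establishes local ubiquity of the resonant sets $\{\alpha: L(n)\equiv 0 \bmod 1\}$ using the Dirichlet-type pigeonhole estimate of Section 2, and then invokes the general limsup-measure theorems of that memoir). Your route is the genuinely different, classical one (in the style of Sprind\v{z}uk, Schmidt and Gallagher): realize $\mc{E}(\psi)=\limsup A_n$, compute $|A_n|=(2\psi(|n|))^{k-d}$ via the row-wise factorization, prove quasi-independence on average, apply the divergence Borel--Cantelli lemma, and upgrade positive measure to full measure by a zero--one law or by localizing the second-moment bound and using Lebesgue density. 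The skeleton is sound, and your structural observations are correct; in fact for $\Q$-linearly independent $n,n'$ one has \emph{exact} independence $|A_n\cap A_{n'}|=|A_n|\,|A_{n'}|$, as the character sum $\int e\bigl(aL_i(n)+bL_i(n')\bigr)\,d\alpha$ vanishes unless $an+bn'=0$, so the off-diagonal terms are even better than you claim.

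However, as a proof the proposal is incomplete at exactly the point you yourself flag: the overlap estimate for aligned pairs ($n'$ a rational multiple of $n$) is asserted (``grouping along each primitive direction and summing'') but never carried out, and this estimate \emph{is} the analytic content of the theorem --- it is the only place monotonicity enters, and along a fixed primitive direction it reduces to the hardest case, Khintchine's original one form in one variable. Without executing it, you cannot exclude that the diagonal-direction contribution dominates $\bigl(\sum_n |A_n|\bigr)^2$, and the Paley--Zygmund/Chung--Erd\H{o}s step collapses. A secondary inaccuracy: your parenthetical claim that monotonicity ``cannot be dropped'' is false in this generality. The Duffin--Schaeffer phenomenon obstructs only the case $d=k-d=1$; for all other pairs $(d,k-d)$ the monotonicity hypothesis in the Khintchine--Groshev theorem is known to be removable (Schmidt for $d\ge 2$, Beresnevich--Velani in general). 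This aside is not load-bearing, but the unproven aligned-pair estimate is a genuine gap. Finally, if you take the zero--one law route rather than the density argument, note that you need the version for systems of linear forms, not just Cassels' original statement for $d=k-d=1$; the localized second-moment argument you sketch as an alternative is the safer, self-contained option.
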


A proof of this theorem can be found in
\cite{BereDickVela2006}. As an application, let $\mathcal{B}_{d,k-d}$
denote the collection of numbers $\alpha\in\R^{d(k-d)}$ with the
property that there exists a constant $C=C(\alpha)>0$ such that, for
all nonzero integer vectors $n\in\Z^d$,
\[\|L(n)\|\ge\frac{C}{|n|^{d/(k-d)}}.\] The Khintchine-Groshev
Theorem implies that the Lebesgue measure of $\mathcal{B}_{d,k-d}$ is
$0$. However in terms of Hausdorff dimension these sets are large. It
is a classical result of Jarnik that $\dim \mc{B}_{1,1}=1$, and this
was extended by Wolfgang Schmidt, who showed in \cite[Theorem
2]{Schm1969} that, for any choices of $1\le d<k,$ \[\dim
\mc{B}_{d,k-d}=d(k-d).\] In fact Schmidt proved something stronger,
that the set $\dim \mc{B}_{d,k-d}$ is $1/2$-winning, in the sense of a
game, now known as Schmidt's game, which we describe presently.

In Schmidt's game two players, B and A, take turns choosing closed
balls in $\R^s$. Player B begins by choosing $B_1\subseteq\R^s$ with
positive radius, followed by player A who chooses $A_1\subseteq B_1$,
and so on, resulting in a countably infinite nested sequence of closed
balls
\[B_1\supseteq A_1\supseteq B_2\supseteq\cdots.\]
The radii of the balls are determined by requiring that
\[\mathrm{rad}(A_i)=a\cdot\mathrm{rad}(B_i)\quad\text{and}\quad\mathrm{rad}(B_{i+1})=b\cdot\mathrm{rad}(A_i),\]
where $a$ and $b$ are fixed parameters of the game (denoted in most
other sources by the letters $\alpha$ and $\beta$) satisfying
$0<a,b<1$. The intersection of the balls is always a single point, and
a set $S\subseteq\R^s$ is called $a$-winning if player A always has a
winning strategy for causing this point to lie in $S$, with the
parameters $a$ and $b$, for any choice of $0<b<1$. A set is called
winning if it is $a$-winning for some $a$. Winning sets have the
following properties:
\begin{enumerate}
  \item[(W1)] Winning sets in $\R^s$ are dense and have dimension $s$.
  \item[(W2)] For any $a>0$, a countable intersection of $a$-winning
    sets is $a$-winning.
  \item[(W3)] The image of an $a$-winning set under a bi-Lipschitz map
    is $a'$-winning, for some $a'>0$ which depends on $a$ and the
    Lipschitz constant associated with the map.
\end{enumerate}
Property (W2) is particularly desirable, since it is not true in
general that a countable intersection of sets with full Hausdorff
dimension will also have full Hausdorff dimension. However, note that
the properties above do not imply that a countable intersection of
bi-Lipschitz images of $a$-winning sets is winning, unless the
Lipschitz constants of the maps are uniformly bounded. For this
reason, in our proof of Theorem \ref{THM.FreqsUppBds1}, we will need
to use a variation of $a$-winning with a property that is slightly
more robust than (W3).

The variation that we will use is something known as hyperplane
absolute winning (HAW). HAW was introduced in
\cite{BrodFishKleiReicWeis2012} as part of a generalization of another
type of winning, absolute winning, which had previously been
considered by McMullen in \cite{McMu2010}. HAW is defined by a game
called the $(s-1)$-dimensional $b$-absolute game in $\R^s$. Here $b$
is a number with $0<b<1/3$. The game is played by two players, B and
A, who again take turns choosing sets in $\R^s.$ Player B begins by
choosing a closed ball $B_1\subseteq\R^s$ of radius
$\delta_1>0$. Next, player A chooses $A_1$ to be an
$\epsilon_1$-neighborhood of any affine hyperplane in $\R^s$ (i.e. a
translate of a subspace of dimension $s-1$) , where $\epsilon_1$ is
any number satisfying $0<\epsilon_1\le b\delta_1$. Player B continues
by choosing a closed ball $B_2$ with radius $\delta_2\ge b\delta_1$,
which is contained in $B_1\setminus A_1$. The game continues in this
way, and a set $S\subseteq\R^s$ is said to be $(s-1)$-dimensionally
$b$-absolute winning if player A always has a strategy for ensuring
that
\[\bigcap_{i=1}^\infty B_i\cap S\not=\emptyset.\]
A set $S$ is called HAW if it is $b$-absolute winning for all
$0<b<1/3$. It is proved in \cite{BrodFishKleiReicWeis2012} that HAW
sets have the following properties:
\begin{enumerate}
  \item[(H1)] HAW sets are $a$-winning for all $a<1/2$.
  \item[(H2)] A countable intersection of HAW sets is HAW.
  \item[(H3)] The image of an HAW set in $\R^s$ under a $C^1$ diffeomorphism of $\R^s$ is also HAW.
\end{enumerate}
Of course, (H1) implies that HAW sets are dense and have full Hausdorff dimension. In our proof below we will use (H1)-(H3) together with the following recent result of Broderick, Fishman, and Simmons.
\begin{theorem}\cite[Theorem 1.3]{BrodFishSimm2013}.\label{THM.HAWForBad}
For every choice of $k>d\ge 1$ the set $\mc{B}_{d,k-d}$ is HAW.
\end{theorem}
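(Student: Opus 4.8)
Since the statement is quoted from \cite{BrodFishSimm2013}, I only sketch the strategy by which one would establish it directly. The plan is to exhibit, for every $0<b<1/3$ and for a sufficiently small constant $C>0$, a winning strategy for player A in the $(d(k-d)-1)$-dimensional $b$-absolute game whose resulting limit point $\alpha$ lies in the set $\{A:\|L(n)\|\ge C|n|^{-d/(k-d)}\text{ for all }n\in\Z^d\setminus\{0\}\}\subseteq\mc{B}_{d,k-d}$; establishing membership in this one set for a fixed $C$ suffices. Identify $\R^{d(k-d)}$ with the space of $(k-d)\times d$ matrices $A$, write $A_1,\ldots,A_{k-d}$ for the rows, and for a nonzero $n\in\Z^d$, an index $i$, and $p\in\Z$ let $H_{i,n,p}=\{A:A_i\cdot n=p\}$ be the associated rational affine hyperplane. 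The violation $\|L(n)\|<C|n|^{-d/(k-d)}$ for a given $n$ means that every coordinate $A_i\cdot n$ lies within $\rho:=C|n|^{-d/(k-d)}$ of an integer, so the forbidden region for $n$ is an intersection of $k-d$ slabs, each a neighborhood of some $H_{i,n,p}$. The two geometric facts that drive the argument are: first, to rescue $n$ it suffices to push $A_i\cdot n$ away from $\Z$ for a \emph{single} row $i$, hence to delete a single hyperplane neighborhood; second, the neighborhood one must delete has thickness $\asymp\rho/|n|=C|n|^{-k/(k-d)}$ in the matrix metric.

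First I would fix the scale at which each $n$ is handled, namely $\delta\asymp|n|^{-k/(k-d)}/b$, chosen so that the required deletion thickness $\rho/|n|$ is at most $b\delta$ (hence a legal move for A) while $\delta|n|\to0$. The latter guarantees that over a ball of radius $\delta$ the quantity $A_i\cdot n$ traverses an interval of length $\ll1$, and therefore comes within $\rho$ of at most one integer $p$. Thus at its designated scale a single move of A, deleting the neighborhood of the unique relevant $H_{i,n,p}$, renders $n$ permanently safe on all subsequent balls.

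The crucial input is a count of how many $n$ require attention near a given scale. Here I would invoke the geometry of numbers: the designated scale corresponds to the Dirichlet normalization $|n|\le T$, $\|L(n)\|\le\rho$ with $T^d\rho^{k-d}\asymp C^{k-d}$, and the number of $n$ whose forbidden region meets a fixed ball of radius $\delta$ under this joint constraint is comparable to the number of points of the lattice $\begin{pmatrix}I&0\\ A&I\end{pmatrix}\Z^k$ in a fixed box, which by Minkowski's theorems is $O(1)$, uniformly in the scale. (Note the joint condition on all $k-d$ rows is essential; a single-row count would be far larger.) Consequently only boundedly many new hyperplanes threaten the play as the balls pass through each dyadic range of radii, and A can discharge them within the available moves, always deleting the most urgent hyperplane not yet neutralised.

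Carrying out this scheduling, verifying that A never falls behind so that every $n$ is disposed of before its thin forbidden slab could capture the shrinking balls, is the main obstacle, and is precisely the bookkeeping that \cite{BrodFishSimm2013} handle (abstractly, via a general HAW criterion). Granting it, the nested balls intersect in a nonempty set on which every $n$ has been neutralised, hence contained in $\mc{B}_{d,k-d}$; since this works for every $b<1/3$, the set is HAW. An alternative route to the same conclusion is to pass through the Dani correspondence, under which $\mc{B}_{d,k-d}$ becomes the set of matrices whose forward orbit under the relevant diagonal flow is bounded, and then to apply the HAW property of bounded-orbit sets for such flows.
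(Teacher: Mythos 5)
The paper contains no proof of this statement for you to match: Theorem~\ref{THM.HAWForBad} is imported verbatim as \cite[Theorem 1.3]{BrodFishSimm2013} and used as a black box, so in context the citation \emph{is} the proof. Judged on its own terms, your sketch gets the setup right (the factorization of the forbidden region into slabs, the observation that deleting a neighborhood of a single hyperplane $H_{i,n,p}$ rescues $n$ because $\|L(n)\|$ is a maximum over rows, and the scale matching $\delta\asymp C|n|^{-k/(k-d)}/b$, which makes each deletion a legal move and ensures at most one relevant integer $p$ per ball), but it contains a genuine gap beyond the one you acknowledge. The counting step is wrong as stated: Minkowski's theorems do \emph{not} give a uniform $O(1)$ bound on the number of points of the unimodular lattice $\left(\begin{smallmatrix} I & 0 \\ A & I \end{smallmatrix}\right)\Z^k$ in a box of volume $\asymp C^{k-d}$. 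Small volume bounds the product $\lambda_1\cdots\lambda_k$ of the successive minima from below, but if $\lambda_1$ is small --- that is, if the center matrix admits one anomalously good approximation --- the box can contain on the order of $1/\lambda_1$ lattice points (the multiples of the short vector), and player B is free to steer the balls toward exactly such matrices even though the eventual limit point must avoid them. Taming this degeneracy is the entire difficulty in every proof that badly approximable sets are winning, from Schmidt's original induction \cite{Schm1969} to the simplex-lemma and potential-game techniques in \cite{BrodFishKleiReicWeis2012} and \cite{BrodFishSimm2013}; it cannot be discharged by a one-line appeal to the geometry of numbers.

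Compounding this, you concede that the scheduling argument --- verifying that A, with one hyperplane-neighborhood deletion per move, neutralizes every dangerous $n$ before its slab can capture the shrinking balls --- is ``precisely the bookkeeping that \cite{BrodFishSimm2013} handle.'' Since that bookkeeping together with the counting \emph{is} the theorem, the proposal is circular: it reduces to citing the very paper whose result is being proved, with an unjustified lemma interposed. Your alternative route via the Dani correspondence has the same character: the HAW property of bounded-orbit sets for the relevant one-parameter diagonal flows was not, at the time, an independent off-the-shelf fact but essentially a restatement of results of this type. For the purposes of the present paper, the only checkable content is that the cited statement indeed covers the set $\mc{B}_{d,k-d}$ as defined in Section~\ref{SEC.Diophantine} and that the HAW notion there coincides with the $(s-1)$-dimensional $b$-absolute game described before the theorem; a verification along those lines, rather than a re-derivation that defers its core to the same reference, would be the appropriate way to audit this step.
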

Finally, we will need the following property of HAW sets.
\begin{lemma}\label{LEM.HAWDirectSum}
Suppose that $1\le d<k$ and that
\[\R^s=\R^d\oplus \R^{d(k-d-1)}.\]
If $S$ is an HAW set in $\R^d$ then, with reference to this direct sum decomposition, the set
\[S\oplus \R^{d(k-d-1)}\]
is an HAW set in $\R^s$.
\end{lemma}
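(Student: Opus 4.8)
The plan is to lift a winning strategy for $S$ in the lower-dimensional absolute game up to $\R^s$ by projecting along the second summand. Write $\pi:\R^s\to\R^d$ for the projection onto the first factor in the decomposition $\R^s=\R^d\oplus\R^{d(k-d-1)}$, and note that $S\oplus\R^{d(k-d-1)}=\pi^{-1}(S)$ and that the relevant dimensions match, since $s-1=(d-1)+d(k-d-1)$. The two geometric facts I would isolate first are: (i) the image under $\pi$ of a ball in $\R^s$ is a ball of the same radius in $\R^d$; and (ii) if $H\subseteq\R^d$ is an affine hyperplane, then $\hat H:=H\oplus\R^{d(k-d-1)}$ is an affine hyperplane in $\R^s$, and the $\epsilon$-neighborhood of $\hat H$ is exactly the $\pi$-preimage of the $\epsilon$-neighborhood of $H$.

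Now fix $b$ with $0<b<1/3$. Since $S$ is HAW, player A has a winning strategy $\sigma$ for the target $S$ in the $(d-1)$-dimensional $b$-absolute game in $\R^d$. I would then describe A's strategy in the $(s-1)$-dimensional $b$-absolute game in $\R^s$ by simulation: whenever player B plays a ball $B_i\subseteq\R^s$ of radius $\delta_i$, feed its projection $\pi(B_i)$, a ball of radius $\delta_i$ in $\R^d$, to the simulated $\R^d$-game as B's move. The strategy $\sigma$ responds with the $\epsilon_i$-neighborhood of some affine hyperplane $H_i\subseteq\R^d$, where $0<\epsilon_i\le b\delta_i$. Player A then plays in $\R^s$ the $\epsilon_i$-neighborhood of the lifted affine hyperplane $\hat H_i=H_i\oplus\R^{d(k-d-1)}$; by fact (ii) this is a legal A-move, since $\hat H_i$ has dimension $s-1$ and $\epsilon_i\le b\delta_i$.

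The next step is to verify that this produces a legal run of the $\R^d$-game and preserves the winning outcome. Because A's move in $\R^s$ is exactly $\pi^{-1}$ of $\sigma$'s move in $\R^d$, any legal response $B_{i+1}\subseteq B_i\setminus(\text{nbhd of }\hat H_i)$ by B projects to a ball $\pi(B_{i+1})\subseteq\pi(B_i)\setminus(\text{nbhd of }H_i)$ of radius $\ge b\delta_i$, which is precisely a legal B-move in the $\R^d$-game. Thus the projected sequence $\big(\pi(B_i)\big)$ is a valid play against the winning strategy $\sigma$, so its nested intersection is a single point lying in $S$. Since $\pi\big(\bigcap_i B_i\big)=\bigcap_i\pi(B_i)$, the unique point $x=\bigcap_i B_i$ satisfies $\pi(x)\in S$, that is, $x\in\pi^{-1}(S)=S\oplus\R^{d(k-d-1)}$. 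Hence A wins, and as $b\in(0,1/3)$ was arbitrary, $S\oplus\R^{d(k-d-1)}$ is HAW.

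Once the geometry is set up, the argument is essentially bookkeeping, and I do not expect a serious obstacle. The one point I would write out most carefully is fact (ii): that thickening the lifted hyperplane $\hat H_i$ in $\R^s$ agrees with the $\pi$-preimage of the thickened hyperplane $H_i$ in $\R^d$. This is what guarantees both that B's legal moves upstairs project exactly to legal moves downstairs and that the radii are transported without any spurious shrinkage, and it is where the orthogonality of the direct sum decomposition is used.
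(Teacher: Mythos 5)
Your proof is correct and follows essentially the same route as the paper: project B's balls to $\R^d$, respond using a winning strategy for $S$ in the $(d-1)$-dimensional $b$-absolute game, and lift A's hyperplane neighborhoods back via $\pi^{-1}$, exactly the simulation argument the paper gives. One cosmetic caveat: in the absolute game the radii $\delta_i$ need not tend to zero, so $\bigcap_i B_i$ need not be a single point; this is harmless, since your identity $\pi\bigl(\bigcap_i B_i\bigr)=\bigcap_i\pi(B_i)$ (valid for nested compact balls) already produces a point of $\bigcap_i B_i$ projecting into $S$, which is all the winning condition requires.
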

\begin{proof}
Consider the $(s-1)$-dimensional $b$-absolute game in $\R^s$, and let $\rho:\R^s\rar\R^d$ be the projection onto the first factor in the direct sum decomposition above.

First player B chooses $\delta_1$ and $B_1$. Let $B_1'=\rho (B_1)$,
and note that $B_1'$ has radius $\delta_1$ in $\R^d$. Choose
$\epsilon_1$ and $A_1'\subseteq\R^d$ according to a winning strategy
for $S$, for the $(d-1)$-dimensional $b$-absolute game. Then player A
can take $A_1=\rho^{-1}(A_1')$, which is an $\epsilon_1$ neighborhood
of an affine hyperplane in $\R^s$.

At the next step player B chooses $B_2\subseteq B_1\setminus A_1$ with
radius $\delta_2$. The image under $\rho$ of $B_2$ is a closed ball
$B_2'\in B_1'\setminus A_1'$ with radius $\delta_2$ in
$\R^d$. Therefore player A can choose $\epsilon_2$ and $A_2'$
according to a winning strategy for $S$ and then take
$A_2=\rho^{-1}(A_2')$. Continuing in this way, we construct a sequence
of sets $B_i,B_i',A_i,$ and $A_i'$ satisfying
\[B_{i+1}\subseteq B_i,\quad\rho(B_i)=B_i',\quad A_i=A_i'\oplus \R^{d(k-d-1)},\]
and
\[\bigcap_{i=1}^\infty B_i'\cap S\not=\emptyset.\]
This guarantees that
\[\bigcap_{i=1}^\infty B_i\cap \left(S\oplus \R^{d(k-d-1)}\right)\not=\emptyset,\]
and since this holds for all $0<b<1/3$, the proof is complete.
\end{proof}

\section{Regular points and the frequency spectrum}
In this section we show how the frequency spectrum of a cut and
project set is determined by the set of volumes of subregions of the
window $\mc{W}$ obtained by a $\Z^d$-action on the
boundary. This allows us to translate problems about the frequency
spectrum into the language of Diophantine approximation.

As above, let $F=\{0\} \times \R^{k-d}$, let $\mc{W} \subset F$, and
consider the strip $\mc{S} = {\pi^*}^{-1}(\mc{W})$.  There is a
natural action of $\Z^k$ on $F$, given by
\[n.w=\pi^*(n)+w = w + (0,n_2-L(n_1)),\]
for $n=(n_1,n_2)\in\Z^k = \Z^d \times
\Z^{k-d}$ and $w\in F$. For each $r\ge 0$ we define the
{\bf $\mathbf{r}$-singular points of type $\mathbf{1}$} by
\[\mathrm{sing}_1(r):=\mc{W}\cap\left((-\pi_1^{-1}(r\Omega)\cap\Z^k). \partial\mc{W}\right),\]
and, similarly, the {\bf $\mathbf{r}$-singular points of type $\mathbf{2}$} by
\[\mathrm{sing}_2(r):=\mc{W}\cap\left((-\pi_2^{-1}(r\Omega)\cap\Z^k). \partial\mc{W}\right).\]
Then, for $i=1$ or $2$ we define the {\bf $\mathbf{r}$-regular points of
type $\mathbf{i}$} by
\[\mathrm{reg}_i(r):=\mc{W}\setminus\mathrm{sing}_i(r).\]
We begin with the following observation.
\begin{lemma}\label{LEM.FreqsVols1}
  For $i=1$ or $2$, if $E$ is totally irrational then every element of $\xi_i
  (r)$ is a sum of volumes of connected components of
  $\mathrm{reg}_i(r)$, divided by the $(k-d)$-dimensional volume of
  $\mc{W}$.
\end{lemma}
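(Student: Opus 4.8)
The plan is to build a dictionary between points of $Y$ and their internal coordinates in the window, show that the patch type is locally constant in this coordinate away from the singular set, and then read off frequencies as volumes via equidistribution.

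First I would attach to each $y \in Y$ its internal coordinate $w := \pi^*(\tilde y) \in \mc{W}$. For any $y' \in Y$, the difference $n := \tilde y' - \tilde y$ lies in $\Z^k$, so $y'$ belongs to the type $i$ patch $P_i(y,r)$ precisely when $\pi_i(n) \in r\Omega$ and $\tilde y + n \in \mc{S}$; the latter condition is equivalent to $w + \pi^*(n) \in \mc{W}$, i.e. to $n.w \in \mc{W}$ in the notation of this section. Hence $P_i(y,r)$ is encoded by the finite set
\[N_i(w) := \{\, n \in \pi_i^{-1}(r\Omega) \cap \Z^k : w + \pi^*(n) \in \mc{W}\,\}.\]
Because $F_0 \cap \Z^k = \{0\}$, the restriction of $\pi_1$ to $\Z^k$ is injective, so the centered patch $\{\pi_1(n) : n \in N_i(w)\}$ both determines and is determined by $N_i(w)$. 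Consequently the equivalence class $\mc{P}_i(y,r)$ depends only on $N_i(w)$.

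Next I would show that $N_i(w)$ is constant on each connected component of $\mathrm{reg}_i(r)$. For a fixed admissible $n \in \pi_i^{-1}(r\Omega) \cap \Z^k$, the membership $w + \pi^*(n) \in \mc{W}$ can change only when $w + \pi^*(n) \in \partial\mc{W}$, that is when $w \in (-n).\partial\mc{W}$; the union of these walls over all admissible $n$ is by definition exactly $\mathrm{sing}_i(r)$. On a connected component of $\mathrm{reg}_i(r)$ the continuous map $w \mapsto w + \pi^*(n)$ avoids $\partial\mc{W}$, and hence takes values entirely in the interior of $\mc{W}$ or entirely in its exterior; thus each such condition, and therefore $N_i(w)$, is locally constant. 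It follows that the patch type is constant on each connected component of $\mathrm{reg}_i(r)$, though distinct components may carry the same type.

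Finally I would pass from volumes to frequencies. Since $E$ is totally irrational, the translation action of $E$ on $\R^k/\Z^k$ is uniquely ergodic, and under the measurable conjugacy with the tiling dynamics the internal coordinates of the points of $Y$ equidistribute in $\mc{W}$ with respect to normalized Lebesgue measure. Applying the ergodic theorem to the indicator of $\{w \in \mc{W} : N_i(w) = T\}$ for a fixed patch type $T$ shows that the frequency $\xi_{\mc{P}_i}$ of that type equals $|\{w \in \mc{W} : N_i(w) = T\}| / |\mc{W}|$. By the previous step this set is, up to the null set $\mathrm{sing}_i(r)$, a union of connected components of $\mathrm{reg}_i(r)$, so each frequency is a sum of the volumes of those components divided by $|\mc{W}|$, as claimed. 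The main obstacle is precisely this last step: justifying that the counting limit defining $\xi_{\mc{P}_i}$ coincides with the normalized volume in $\mc{W}$. This requires the equidistribution afforded by total irrationality, together with the observation that $\mathrm{sing}_i(r)$ is a finite union of pieces of affine hyperplanes and hence $(k-d)$-dimensionally null, so it contributes nothing to the volumes and the discrepancy between regular and singular centers is harmless.
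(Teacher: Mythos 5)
Your proof is correct and takes essentially the same route as the paper's: both encode each $y\in Y$ by its internal coordinate $\pi^*(\tilde y)\in\mc{W}$, observe that the patch type can only change when some $n.y^*$ crosses $\partial\mc{W}$ (so it is constant on connected components of $\mathrm{reg}_i(r)$, with possibly several components sharing a type), and identify frequencies with normalized Lebesgue volumes via the measurable conjugacy to the totally irrational toral translation and the ergodic theorem. Your extra details --- the explicit set $N_i(w)$, the injectivity of $\pi_1$ on $\Z^k$, and the nullity of $\mathrm{sing}_i(r)$ --- simply make explicit steps the paper leaves implicit.
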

\begin{proof}
To each point $y \in Y$, we associate a point $y^* = \pi^*(\tilde y)
\in \mc{W}$. This point determines the pattern around $y$, as follows.
Each point $y' \in Y$ lifts to a point $\tilde y' = \tilde y + n$.
But such a point is in $\mc{S}$ if and only if $\pi^*(\tilde y')
= n.y^*$ lies in $\mc{W}$. As we vary $y^*$, the pattern around $y$ can
only change when some $n.y^*$ passes through $\partial \mc{W}$, that is when
$y^*$ passes from one connected component of $\mathrm{reg}_i(r)$ to another.
The only difference between $i=1$ and $i=2$ is the set of $n$'s
being considered. In both cases, each connected component of $\mathrm{reg}_i(r)$
corresponds to a single equivalence class of patches, although {\em it is possible that two or more components could correspond to the same patch (see Figure 3).}

\begin{figure}[h]\label{fig:disconnected-regions}
\centering\def\svgwidth{.65\columnwidth}
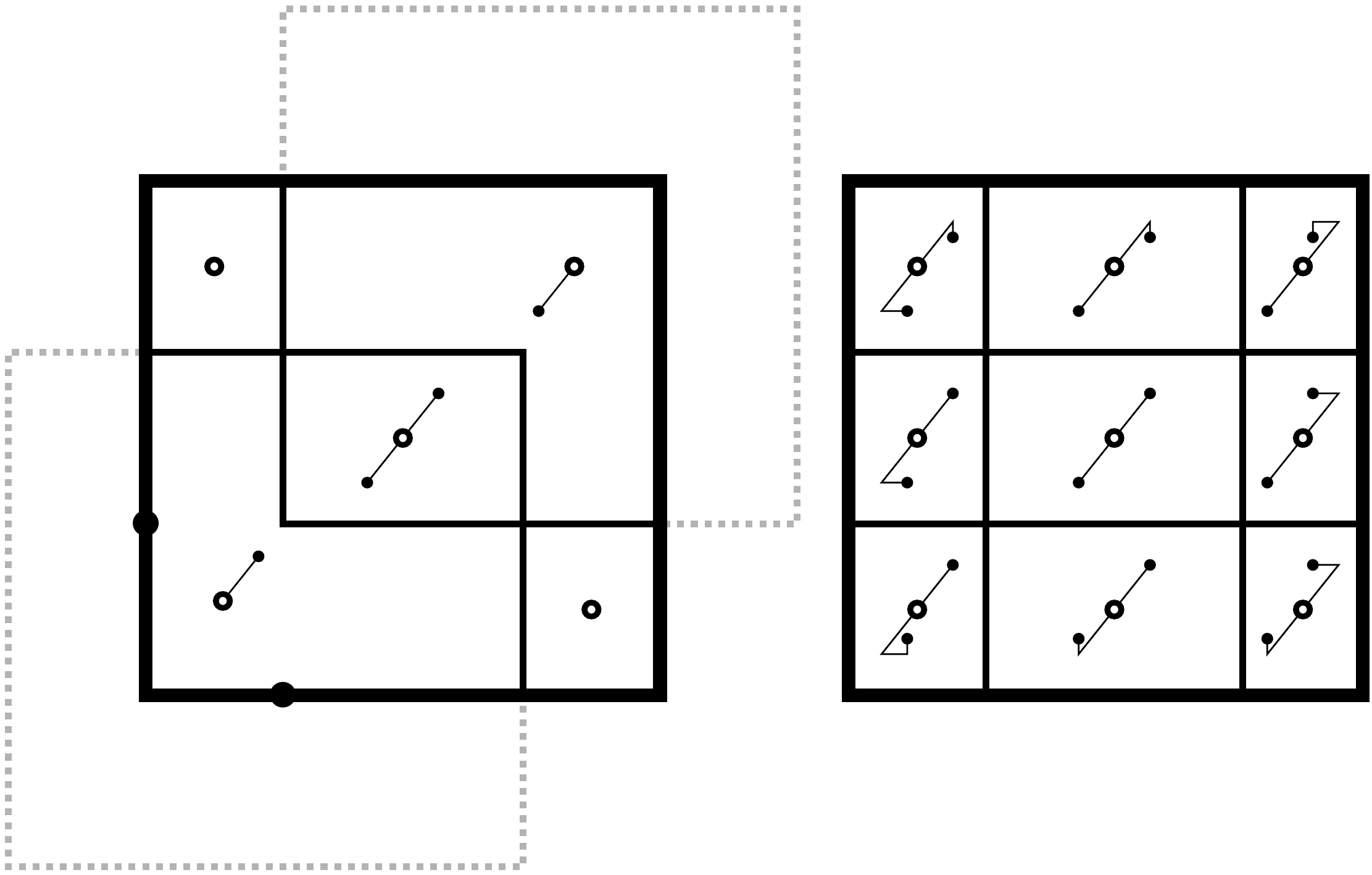
\caption{Example of a $3$-to-$1$ projection with $E$ taken to be the line spanned by $(1,\alpha_1,\alpha_2)$ and $\mc{W} \subset \{0\} \times \mathbb{R}^2$ the unit square. The left-hand figure illustrates the connected components of $\mathrm{reg}_1(r)$ for a small value of $r$. Each component contains a diagram of the corresponding (lifted) type $1$ patch, each centered at its white dot. Note that each $2$-point patch corresponds to a non-convex region and the $1$-point patch correspond to a disconnected region. By comparison, the components of $\mathrm{reg}_2(r)$ (illustrated on the right) are convex and each corresponds to a unique type $2$ patch.}
\end{figure}

Since the action of $E$ on the space of tilings is measurably conjugate
to an irrational toral rotation, and since the only invariant
measure is Lebesgue measure, the ergodic theorem states that
the frequency of each patch is proportional to the
Lebesgue measure of the portion of $\mc{W}$ to which it corresponds. Since
we have normalized our frequencies so that the sum of the frequencies is 1,
the lemma follows.
\end{proof}

We would like to mention that the idea motivating this lemma is very
closely related to Proposition 2.1 of Julien's paper
\cite{Juli2010}. Unfortunately, the Proposition in that paper does not
appear to be correct. Using Julien's notation, it is true that points
lying in the same connected component of $\mathrm{reg}(r)$ give rise
to equivalent pointed patches of size $r$. However the converse is not
true in general, unless $k-d=1$. When $k-d>1$ it can happen that two
pointed patches are identical, but that continuously moving the
distinguished point of one to the other cannot be accomplished without
passing through a region where extra points are temporarily added to
the patch. If it were true that every equivalence class of pointed
patches corresponded uniquely to a connected component of
$\mathrm{reg}(r)$, this would allow for a considerable simplification
of the arguments which follow, as well as obviating the need for
any hypothesis on the shape of $\Omega$ in Theorem \ref{THM.FreqsUppBds2}
and for Lemma \ref{LEM.Reg1->Reg2ForAE} below. However, in general, we
do not even know how to guarantee that there is only a bounded number
of connected components corresponding to each equivalence class of
pointed patch. One of the roles of the Diophantine hypotheses in our
theorems is to limit this bad behavior.

When $i=2$
and $\mc{W}$ is a parallelotope generated by integer vectors, a
stronger conclusion can be drawn.
\begin{lemma}\label{LEM.FreqsVols2}
  If $E$ is totally irrational and if $\mc{W}$ is a parallelotope
  generated by integer vectors, then every element of $\xi_2(r)$ is
  equal to the volume of a single connected component of
  $\mathrm{reg}_2(r)$, divided by the $(k-d)$-dimensional volume of
  $\mc{W}$.
\end{lemma}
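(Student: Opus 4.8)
The plan is to sharpen Lemma~\ref{LEM.FreqsVols1} by showing that, for an integer parallelotope window, the map sending a connected component of $\mathrm{reg}_2(r)$ to its associated type~$2$ patch is injective; once this is established, each patch corresponds to a single component and the ``sum of volumes'' in Lemma~\ref{LEM.FreqsVols1} collapses to a single term.

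First I would fix coordinates adapted to the window. Let $v_1,\dots,v_{k-d} \in F \cap \Z^k$ be the integer generators of $\mc{W}$ and use them as a basis to identify $F \cong \R^{k-d}$, so that $\mc{W} = [0,1]^{k-d}$. Writing $J := \{n \in \Z^k : \pi_2(n) \in r\Omega\}$ and letting $c(n)$ denote the coordinate vector of $-\pi^*(n)$, every relevant window translate takes the product form $\mc{W} - \pi^*(n) = \prod_{\ell=1}^{k-d} [c(n)_\ell, c(n)_\ell + 1]$. Thus each face of each such translate lies in an axis-parallel hyperplane, and only finitely many translates meet the compact set $\mc{W}$. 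Consequently $\mathrm{sing}_2(r)$ is a finite union of axis-parallel hyperplane pieces cutting $\mc{W}$ into a grid, the connected components of $\mathrm{reg}_2(r)$ are open boxes (in particular convex, matching Figure~3), and---since no face of any translate passes through the interior of a box---each box $B$ is either contained in or disjoint from each $\mc{W} - \pi^*(n)$. Via the injectivity of $\pi_1$ on $\Z^k$, the type~$2$ patch determined by $B$ is exactly the set $\{n \in J : B \subseteq \mc{W} - \pi^*(n)\}$.

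The mechanism for distinguishing boxes is the lattice shift by a generator: for $n \in J$ and any $\ell$, the point $n + v_\ell \in \Z^k$ again lies in $J$, because $v_\ell \in F = \ker \pi_2$ leaves $\pi_2(n)$ unchanged, while $c(n + v_\ell)$ differs from $c(n)$ by a unit step in the $\ell$th coordinate. This is exactly where integrality of $\mc{W}$ is used: it guarantees that every unit translate of a relevant window (in any of the $k-d$ axis directions) is itself relevant. Given two distinct boxes $B, B'$, I would choose a coordinate $\ell_0$ in which their intervals differ, say with $I_{\ell_0}(B)$ lying above $I_{\ell_0}(B')$ and separated by a cut value $v$. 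Starting from any $n \in J$ having a face at level $v$ in direction $\ell_0$ and applying these shifts, I would produce $n_1 \in J$ with $c(n_1)_{\ell_0} = v$ (so that $B$ satisfies and $B'$ violates the $\ell_0$th membership condition) and with each remaining coordinate repositioned so that the interval $I_{\ell'}(B)$ sits inside $[c(n_1)_{\ell'}, c(n_1)_{\ell'} + 1]$. The box $B$ then lies in $\mc{W} - \pi^*(n_1)$ while $B'$ does not, so $n_1$ separates the two patches and injectivity follows; combined with Lemma~\ref{LEM.FreqsVols1} and total irrationality this yields the claim.

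The main obstacle is the conjunctive nature of box membership: lying inside a translate $\mc{W} - \pi^*(n)$ requires all $k-d$ coordinate conditions simultaneously, so merely crossing one face of one translate need not change the patch. The crux is therefore the repositioning step, whose feasibility rests on the observation that a box interval $I_{\ell'}(B)$, being bounded by consecutive cut values in direction $\ell'$, contains no point of $c(n)_{\ell'} + \Z$ in its interior (such a point would itself be a cut value); hence $I_{\ell'}(B)$ fits inside some unit interval $[c(n)_{\ell'} + m, c(n)_{\ell'} + m + 1]$, and the required shift exists.
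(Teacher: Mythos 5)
Your proposal is correct, but it proves the lemma by a genuinely different mechanism than the paper. The paper argues by continuity: assuming $P_2(y_1,r)$ and $P_2(y_2,r)$ are equivalent, it moves $y^*$ along the straight segment from $y_1^*$ to $y_2^*$ and shows the patch cannot change. There, convexity of $\mc{W}$ guarantees no point of the patch is ever removed along the segment, while integrality enters through the fact that $\mc{W}$, minus part of its boundary, is a strict fundamental domain for a finite-index sublattice of $\Z^{k-d}$, so that each $n_1\in\Z^d$ contributes exactly $I$ points to the strip and any gained point would force a lost one; hence $y_1^*$ and $y_2^*$ must lie in the same component. You instead prove injectivity of the map from components to patch classes directly and constructively: integrality enters through the invariance of the index set $J$ under shifts by the generators $v_\ell\in F\cap\Z^k$ (valid since $F=\ker\pi_2$), which upgrades the union of translated faces to a full axis-parallel grid, makes the components open boxes, reduces the membership $B\subseteq\mc{W}-\pi^*(n)$ to independent per-coordinate conditions, and lets you manufacture an explicit translate $n_1\in J$ separating any two distinct boxes. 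Your two delicate steps both check out: the repositioning is feasible because an interval between consecutive cut values contains no point of $c(n)_{\ell'}+\Z$ in its interior and so fits inside a single fundamental interval, and the reduction from patch equivalence to equality of the sets $\{n\in J: y^*\in\mc{W}-\pi^*(n)\}$ correctly rests on $F_0\cap\Z^k=\{0\}$. As for what each approach buys: the paper's exchange argument is shorter, coordinate-free, and needs only the index count; your route additionally establishes the convex box structure of $\mathrm{reg}_2(r)$ (which the paper only displays in Figure 3 and rederives later, in the proof of Lemma \ref{LEM.Reg1->Reg2ForBad}, via the same observation that the action of $\{0\}\times\Z^{k-d}$ maps the faces of the cube into $\bigcup_i(H_i+\Z e_{d+i})$), so you effectively front-load that structural fact and obtain an explicit witness distinguishing any two components, at the cost of a coordinate-dependent and somewhat longer argument.
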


\begin{proof}
Following the proof of Lemma \ref{LEM.FreqsVols1}, we must show that, under the additional hypothesis on $\mc{W}$, different connected components of $\mathrm{reg}_2(r)$ correspond to different type $2$ patches of size $r$.

Suppose that $y_1$ and $y_2 \in Y$, and that $P_2(y_1,r)$ is equivalent to
$P_2(y_2,r)$. Imagine varying $y^*$ in
a straight line from $y_1^*$ to $y_2^*$.  In moving $y^*$ from one
connected component to another, the patch $P_2(y,r)$ gains and/or
loses points whenever $y^*$ crosses from one component to another.  We
will show that none of the points of $P_2(y_1,r)$ may be removed in
going from $y_1^*$ to $y_2^*$, and that no points may be added without
removing other points. Combining these observations, no points can be
added or removed, so $y_1^*$ and $y_2^*$ must lie in the same component.

To see that no points may be removed, note that $\mc{W}$ is convex. Thus, for each
$n$ for which $\pi_1(\tilde y_i+n)$ is in $P_2(y_i,r)$,
the set of points $y^*$ satisfying $n.y^* \in \mc{W}$ is convex. Since
$n.y_1^*$ and $n.y_2^*$ are in $\mc{W}$, all points on the line segment connecting them must also be
in $\mc{W}$. Thus  all points $y^*$ on the line segment
correspond to
patches that contain a translate of $P_2(y_i,r)$.

Next notice that, since $\mc{W}$ is a parallelotope generated by integer vectors,
after removing a subset of its boundary it is a (strict) fundamental domain for a sublattice of $\Z^{k-d}$, of some index $I$. This
implies that for each $n_1 \in \Z^d$, and each $\tilde y \in \R^d$,
there are exactly $I$ points $n_2 \in \Z^{k-d}$ such that $\tilde y +
(n_1,n_2) \in \mc{S}$ (with appropriate counting of points on the
boundary of $\mc{S}$). In other words, as we cross a boundary between
connected components, a point is removed from $P_2(y,r)$ for each
point added. We have already shown that no points can be removed, so no
points can be added.
\end{proof}

At first glance, the hypotheses of Lemma \ref{LEM.FreqsVols2} may
seem overly restrictive, since most cut and project tilings do not have
integer parallelotope windows. In so-called `canonical'
cut and project tilings, the window is the image under $\pi^*$ of the unit cube in
$\R^k$, and typically has irrational volume in $F$. However,
the resulting point patterns are often dynamically and combinatorially
equivalent (mutually locally derivable) to point patterns
obtained from integer parallelotope windows.

For example, suppose that $d=k-d=1$.  If $E$ has irrational slope
$\alpha < 1$, and if the window is
\[\mc{W}=\pi^*([0,1]^2)=\{0\} \times [-\alpha, 1],\]
then the set $Y$ codes a
Sturmian sequence. There are two intervals between successive points,
obtained by projecting horizontal and vertical intervals of length 1
onto $E$ along $F_0$. Call these intervals $A$ and $B$, respectively.
To each integer we either associate two intervals $AB$ (meaning $A$, then $B$)
or one interval
$A$, depending on whether $\lfloor(n+1)\alpha + s\rfloor - \lfloor n\alpha + s\rfloor$ equals 1
or 0 (where $\lfloor \cdot \rfloor$ denotes the integer part of a real number). If we delete the points that occur at the beginning of the $B$
intervals (thereby combining each pair $AB$ into a single larger
interval $C$), then we get an equivalent point pattern with exactly
one point per integer, and the spacing between successive points codes
the Sturmian sequence.  This is exactly the cut and project pattern
with window $\{0\} \times [0,1]$. Lemma \ref{LEM.FreqsVols2} does not directly
apply to the canonical cut and project pattern, but counting patterns
in that tiling is essentially equivalent to counting patterns in the modified
pattern, to which Lemma \ref{LEM.FreqsVols2} does apply.

The first part of the proof of Lemma \ref{LEM.FreqsVols2} applies equally
well to the more geometric type $1$ patches.
If the patches associated to $y_1^*$ and $y_2^*$ are equivalent, then
any patch associated to $t y_1^* + (1-t)y_2^*$ must contain all the
points of $P_1(y_i,r)$.  However, the final part of the argument does
not work.  Since $\pi_1(\tilde y+(n_1,n_2))$ depends on both $n_1$ and
$n_2$, some points associated with $n_1$ might have images in $r\Omega
+ y$, while others might not. As we change the (fixed number) of
points associated with $n_1$, points can jump in and out of a patch,
so in going from $y_1^*$ to $y_2^*$, we could gain a point, then have
it leave, leading to the same patch that we started with.

\section{Proof of Theorem \ref{THM.FreqsUppBds1}}
In this section we suppose that $\mc{W}\subseteq F$ is a parallelotope
generated by $k-d$ linearly independent vectors $w_1,\ldots,
w_{k-d}\in F \cap \Z^k$.
We then define an integer linear transformation $B: F \to F$
such that $\mc{W}$ is the image under $B$ of the unit hypercube
$\{0\} \times [0,1]^{k-d}$. Applying the linear transformation $B^{-1}$ converts
the window to the unit hypercube, converts the integer
lattice $\Z^k$ to $\Z^d \times \Lambda$, where $\Lambda$ is a
finite-index extension of $\Z^{k-d}$, and converts $E$ to the graph of
the linear transformation $L' = B^{-1} L$. Let $\{\beta_{ij}\}$
be the matrix elements of $L'$. Since the Jacobian of a linear transformation
is constant, our linear transformation does not affect ratios of volumes, and
so does not change $\xi_1(r)$ or $\xi_2(r)$, once we apply the appropriate
linear transformation to $\Omega$.
We emphasize that $\beta$
depends on both $\mc{W}$ (i.e. on $B$) and on $\alpha$.

In what follows it may be helpful for the reader to keep in mind the
following guiding principles in our proof of Theorem
\ref{THM.FreqsUppBds1}:
\begin{enumerate}
\item[(P1)] Diophantine approximation hypotheses on $\beta$ allow us
  to appeal to Theorem \ref{THM.TransferTheorem} to quantify the
  density modulo $1$ of the values taken by $L'(n)$, for $|n|<r$.
\item[(P2)] Under the hypotheses, the values of $L'(n)$, for
  $|n|<r$, can not accumulate too closely at any point in the
  torus. This guarantees that for any fixed $C>0$ the number of
  hyperplanes of $\mathrm{sing}_2(r+C)$ which intersect any given
  connected component of $\mathrm{reg}_2(r)$ is bounded.
\item[(P3)] By a refinement of the argument used to prove (P2), it
  follows that for each $1 \le i \le k-d$ the number of possible distances
between consecutive
  values of $L_i'(n)~\mathrm{mod}~1$, for $|n|<r$, is uniformly
  bounded.
\item[(P4)] Every choice of parallelotope $\mc{W}$ with integer
  vertices defines a collection of $\alpha$ for which the Diophantine
  hypotheses in (P1) on $\beta$ are satisfied. Each such collection of
  $\alpha$ is the image of an HAW set under a $C^1$ map, therefore the
  intersection of all such sets is HAW and has full Hausdorff
  dimension.
 \end{enumerate}
 First, incorporating principles (P1) and (P2), we demonstrate how
 assumptions on the Diophantine approximation properties of the
 numbers $\beta$ can be used to control the complexity of the sets
 $\mathrm{reg}_1(r).$ We work with the real-valued functions
$L'_i(n) = \sum_{j=1}^d \beta_{ij} n_j$, one at a time.

\begin{lemma}\label{LEM.Reg1->Reg2ForBad}
  With notation as above, suppose that
  $(\beta_{ij})_{j=1}^d\in\mc{B}_{d,1}$ for each $1\le i\le k-d.$ Then
  there exist $c_1,c_2>0$ such that, for all $r>0$, every element of
  $\xi_1(r)$ is a sum of at most $c_1$ volumes of connected components
  of $\mathrm{reg}_2(r+c_2)$, divided by the volume of $\mc{W}$.
\end{lemma}
\begin{proof}
First, it follows from the definitions of type $1$ and type $2$
patches (as well as the convexity of $\Omega$) that we can choose $c_2>0$ so that, for all $y\in Y$ and all sufficiently large $r$,
\begin{equation}\label{EQN.PatchInclusions}
P_2(y,r-c_2)\subseteq P_1(y,r)\subseteq P_2(y,r+c_2).
\end{equation}
This is not completely obvious, but it follows from standard facts about Hausdorff distance and dilations of convex sets.

Therefore, each connected component of $\mathrm{reg}_1(r)$ is contained in a single connected component of $\mathrm{reg}_2(r-c_2),$ and its volume is a sum of volumes
of connected
components of $\mathrm{reg}_2(r+c_2).$
We will show that each component of $\mathrm{reg}_2(r-c_2)$
is the union of at most $c_1$ components of $\mathrm{reg}_2(r+c_2)$,
hence that the volume of each component of $\mathrm{reg}_1(r)$ is
the sum of at most $c_1$ volumes of components of $\mathrm{reg}_2(r+c_2)$.

Let $\Omega'$ be the projection of $\Omega$ on $\R^d$ along $F$. That
is, $\Omega'$ is the set of first $d$ coordinates of the points in $\Omega$.
Let $R_1$ and $R_2$ be closed hypercubes in $\R^d$, each containing a neighborhood of $0$, and satisfying $R_1\subseteq\Omega'\subseteq R_2.$ Also, for $1\le i\le
k-d$, let $H_i$ be the hyperplane in $F$ orthogonal to $e_{d+i}$.
Let
\[\mathrm{sing}_2'(r):=B^{-1}\mathrm{sing}_2(r)\quad\text{and}\quad\mathrm{reg}_2'(r):=B^{-1}\mathrm{reg}_2(r).\] The set $\mathrm{sing}_2'(r)$ is composed of the intersection
of the unit hypercube with translates of the faces of the hypercube
by points of the form $(-n_1,-\lambda) \in \Z^d \times \Lambda$
with $n_1 \in r\Omega'$. The action of $\{0\} \times
\Z^{k-d}$ maps the faces of the unit hypercube into the set
\[\bigcup(H_i + \Z e_{d+i}),\]
so, for the purposes of studying $\mathrm{sing}_2'(r)$, we can restrict our attention to the action
of $\Z^d \times (\Lambda/\Z^{k-d})$.

For any $r>0,$ the set
$\mathrm{sing}_2'(r)$ is therefore given by
\begin{align*}
[0,1]^{k-d}\cap\left(\bigcup_{n\in r\Omega'\cap\Z^d}\bigcup_{i=1}^{k-d}\left(H_i+
L'(n) + \Lambda\right)\right).
\end{align*}
When considering translates of $H_i$, all that matters is the $(d+i)$th
coordinate of the offset. The $(d+i)$th coordinates of $\Lambda$ form
a group of the form $m_i^{-1} \Z$, for some $m_i\in\Z$, so we have that
\[H_i+
L'(n)
+ \Lambda=H_i+(L_i'(n)+m_i^{-1}\Z)e_{d+i}.\]
The assumption that $(\beta_{ij})_{j=1}^d\in\mathcal{B}_{d,1}$ implies
that $(m_i\beta_{ij})_{j=1}^d\in\mathcal{B}_{d,1}$ for each $i$, and
it follows from Theorem \ref{THM.TransferTheorem} that there is a
constant $c_3>0$ such that, for each $1\le i\le k-d,$ the set
\[\{L_i'(n)~\mathrm{mod}~m_i^{-1}:n\in rR_1\cap\Z^k\}\] is
$c_3/r^d$-dense in $\R/m_i^{-1}\Z$. From this we conclude that if $U$
is any connected component of $\mathrm{reg}_2'(r)$ then $U$ is a rectangle
of the form
\[ \{t\in W: \ell_i<t_i<r_i\},\]
with
\[r_i-\ell_i\le\frac{c_3}{m_ir^d}~\text{for each}~1\le i\le k-d.\]
Now observe that the number of connected components of $\mathrm{reg}_2'(r+2c_2)$ which intersect $U$ is equal to
\[\prod_{i=1}^{k-d}\left(1+\#\{n\in ((r+2c_2)\Omega'\setminus r\Omega')\cap\Z^k:L_i'(n)\in (\ell_i,r_i)~\mathrm{mod}~m_i^{-1}\}\right).\]
This is bounded above by
\[\prod_{i=1}^{k-d}\left(1+\#\{n\in (r+2c_2)R_2\cap\Z^k:L_i'(n)\in (\ell_i,r_i)~\mathrm{mod}~m_i^{-1}\}\right),\]
and, again using our hypotheses on $\beta$, we see that the final
quantity is bounded above by a constant $c_1>0$. We have shown that
every connected component of $\mathrm{reg}_2'(r)$ is a union of at most
$c_1$ connected components of $\mathrm{reg}_2'(r+2c_2)$. After applying the linear map $B$ this,
together with the observations in the first paragraph, completes the
proof of the lemma.
\end{proof}
Next, following principle (P3) above, we refine the argument of Lemma
\ref{LEM.Reg1->Reg2ForBad} to count the number of volumes of connected
components of $\mathrm{reg}_2(r)$.
\begin{lemma}\label{LEM.VolumeBdForBAD}
  If $\beta$ satisfies the hypotheses of Lemma
  \ref{LEM.Reg1->Reg2ForBad} then there is a constant $c_4>0$ such
  that, for any $r>0$, there are at most $c_4$ distinct volumes of
  connected components of $\mathrm{reg}_2(r)$.
\end{lemma}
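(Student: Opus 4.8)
The plan is to exploit the product structure of the regions $\mathrm{reg}_2'(r)=B^{-1}\mathrm{reg}_2(r)$ established in the proof of Lemma \ref{LEM.Reg1->Reg2ForBad} and reduce the whole problem to a one-dimensional gap-counting statement in each of the $k-d$ coordinate directions. Recall that after applying $B^{-1}$ the window is the unit hypercube and $\mathrm{sing}_2'(r)$ is a union of the hyperplanes $H_i$ (orthogonal to $e_{d+i}$), translated in the $i$th coordinate by the values $L_i'(n)\bmod m_i^{-1}$ as $n$ ranges over $r\Omega'\cap\Z^d$. Since these hyperplane families are attached to distinct coordinate axes, each connected component of $\mathrm{reg}_2'(r)$ is an axis-parallel box $\{t:\ell_i<t_i<r_i,\ 1\le i\le k-d\}$, and its volume is the product $\prod_{i=1}^{k-d}(r_i-\ell_i)$ of the gap lengths it cuts out in each direction. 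Because $B$ has constant Jacobian, it suffices to bound, for each fixed $i$, the number of distinct gaps between consecutive points of
\[\Theta_i(r):=\{L_i'(n)\bmod m_i^{-1}:n\in r\Omega'\cap\Z^d\}\subset\R/m_i^{-1}\Z;\]
the number of distinct volumes is then at most the product over $i$ of these per-direction bounds.

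Fix $i$. I would first record two facts about $\Theta_i(r)$. On the one hand, the density statement already proved in Lemma \ref{LEM.Reg1->Reg2ForBad} shows that $\Theta_i(r)$ is $c_3/r^d$-dense, so every gap has length bounded by a constant multiple of $r^{-d}$. On the other hand, since $\Omega'\subseteq R_2$ is bounded, any two points of $\Theta_i(r)$ arise from integer vectors lying in a common box of side comparable to $r$. Now suppose $(\theta_n,\theta_{n'})$ is a gap, with $\theta_n,\theta_{n'}$ the consecutive values coming from $n,n'\in r\Omega'\cap\Z^d$, and set $m=n'-n$. Then $L_i'(m)\equiv\theta_{n'}-\theta_n\pmod{m_i^{-1}}$, and since the gap length $g$ is of order $r^{-d}$ it follows that $g$ equals the distance from $L_i'(m)$ to $m_i^{-1}\Z$. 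Thus every gap length is realised as $\mathrm{dist}(L_i'(m),m_i^{-1}\Z)$ for some difference vector $m$ with $|m|\le Ar$ (with $A$ depending only on $R_2$) and $\mathrm{dist}(L_i'(m),m_i^{-1}\Z)\le c_3/r^d$.

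It therefore remains to bound the cardinality of
\[M_i(r):=\{m\in\Z^d:|m|\le Ar,\ \mathrm{dist}(L_i'(m),m_i^{-1}\Z)\le c_3/r^d\},\]
and this is exactly where the badly approximable hypothesis enters. If $m,m'\in M_i(r)$ are distinct then $v=m-m'$ satisfies $|v|\le 2Ar$ and $\mathrm{dist}(L_i'(v),m_i^{-1}\Z)\le 2c_3/r^d$, while the assumption $(m_i\beta_{ij})_{j=1}^d\in\mathcal{B}_{d,1}$ supplied by Lemma \ref{LEM.Reg1->Reg2ForBad} gives a lower bound $\mathrm{dist}(L_i'(v),m_i^{-1}\Z)\ge C/|v|^d$. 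Combining the two forces $|v|\ge\epsilon r$ for a fixed $\epsilon>0$; that is, distinct elements of $M_i(r)$ are pairwise separated by at least $\epsilon r$ in the sup-norm. As $M_i(r)$ lies in a box of side $2Ar$, a packing count bounds $\#M_i(r)$ by $(2A/\epsilon+1)^d$, a constant independent of $r$. Hence the number of distinct gap lengths in direction $i$ is bounded, and taking the product over $1\le i\le k-d$ produces the required constant $c_4$.

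The main obstacle is the passage from gaps to difference vectors together with the separation estimate: one must verify that each gap length is genuinely pinned down by a single vector $m=n'-n$ (using the density bound to identify $g$ with $\mathrm{dist}(L_i'(m),m_i^{-1}\Z)$), and then convert the badly approximable lower bound on $\mathrm{dist}(L_i'(v),m_i^{-1}\Z)$ into a sup-norm separation of the vectors of $M_i(r)$. Everything else, namely the product structure of the boxes and the reduction of the volume count to a per-coordinate gap count, is a direct consequence of the setup recorded in the proof of Lemma \ref{LEM.Reg1->Reg2ForBad}.
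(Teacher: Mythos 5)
Your proposal is correct and follows essentially the same route as the paper: both represent each connected component of $\mathrm{reg}_2'(r)$ as an axis-parallel box whose side lengths are gaps of the form $(a_i'-a_i)+L_i'(n_i'-n_i)$, bound the number of admissible difference vectors $n_i'-n_i$ via the badly approximable hypothesis, and conclude with the product bound $c_4=c_5^{k-d}$ over the $k-d$ directions. The only difference is one of detail, not of method: you make explicit the packing step (distinct admissible difference vectors are pairwise separated by $\epsilon r$ in sup-norm inside a box of side comparable to $r$) that the paper compresses into the assertion that at most $c_5$ vectors satisfy (\ref{EQN.GapsBad1}) and (\ref{EQN.GapsBad2}).
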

\begin{proof}
  By the argument of Lemma \ref{LEM.Reg1->Reg2ForBad}, there exists a
  constant $c_3>0$ such that every connected component of
  $\mathrm{reg}_2'(r)$ is contained in a cube in $W$ of side
  length no greater than $c_3/r^d$. Suppose that $U$ is a
  connected component of $\mathrm{reg}_2'(r)$ and, using the
  notation in the proof of Lemma \ref{LEM.Reg1->Reg2ForBad}, for each
  $1\le i\le k-d$ choose $n_i,n_i'\in r\Omega'\cap\Z^k$ and
  $a_i,a_i'\in m_i^{-1}\Z$ so that
\begin{equation}\label{EQN.GenericComponent}
U=\prod_{i=1}^{k-d}(a_i+L_i'(n_i),a_i'+L_i'(n_i')).
\end{equation}
For each $i$ we then have that
\begin{equation}\label{EQN.GapsBad1}
(a_i'-a_i)+L_i'(n_i'-n_i)\le\frac{c_3}{r^d},
\end{equation}
and
\begin{equation}\label{EQN.GapsBad2}
|n_i'-n_i|\le 2r\cdot \mathrm{diam}(\Omega').
\end{equation}
The assumption on $(\beta_{ij})_{j=1}^d$ guarantees that there exists
a constant $c_5>0$ with the property that there at most $c_5$ values
of $n_i'-n_i$ for which (\ref{EQN.GapsBad1}) and (\ref{EQN.GapsBad2})
hold. For $r$ sufficiently large, each such value uniquely determines
the value of $a_i-a_i'$ satisfying (\ref{EQN.GapsBad1}). Therefore the
volume of $U$ can take one of at most $c_4=c_5^{k-d}$ values.
\end{proof}
Finally, following (P4), let $G=\mathrm{GL}_{k-d}(\Q)\cap M_{k-d}(\Z)$
and consider the set of $\alpha\in\R^{d(k-d)}$ with the property that,
for all $B\in G$, the point $\beta=B^{-1}\alpha$ satisfies the
hypotheses of Lemma \ref{LEM.Reg1->Reg2ForBad}. For $1\le \ell\le k-d$
define $\mc{A}_\ell\subseteq\R^{d(k-d)}$ by
\[\mc{A}_\ell=\{(\beta_{ij})\in\R^{d(k-d)}:(\beta_{\ell j})_{j=1}^d\in\B_{d,1}\}.\]
By Theorem \ref{THM.HAWForBad}, $\B_{d,1}$ is an HAW set, and it
follows from Lemma \ref{LEM.HAWDirectSum} that each set $\mc{A}_\ell$
is also HAW. Therefore the set
\[\bigcap_{\ell=1}^{k-d}\mc{A}_\ell\]
is also HAW, as is the set $\A$ defined by
\[\A:=\bigcap_{B\in G}B\left(\bigcap_{\ell=1}^{k-d}\mc{A}_\ell\right).\]
We conclude that the set $\mc{A}$ has Hausdorff dimension $d(k-d),$
and the conclusion of Theorem \ref{THM.FreqsUppBds1} holds for any
$\alpha\in\mc{A}$.

\section{Proof of Theorem \ref{THM.FreqsUppBds2}}
In this section we use $w_i, B, \beta, L_i', \mathrm{sing}_2'(r),$ and $\mathrm{reg}_2'(r)$ to denote the
quantities introduced in the previous section. Many
of the ideas in the proof of Theorem \ref{THM.FreqsUppBds1} will be
applied directly in our proof of Theorem \ref{THM.FreqsUppBds2}. Since
we will be working with a set consisting of a.e. $\alpha$, the
arguments that we included in principle (P4) above actually become
easier, although we lose a power of a logarithm. We no longer need
HAW, and things are better for us in that respect. Most of the proof
of the bound for $\#\xi_2(r)$ in Theorem \ref{THM.FreqsUppBds2}
follows quite easily from Lemma \ref{LEM.FreqsVols2} and a
modification of the argument used to prove Lemma
\ref{LEM.VolumeBdForBAD}. However, there is an important difference
when we come to the proof of the bound for $\#\xi_1(r)$, which is the
reason why we need a slightly stronger hypothesis.

Before going into the details of the proof let us explain why our
previous proof does not give a non-trivial upper bound for
$\#\xi_1(r)$. We can still apply what we called principle (P1) before,
to draw some conclusions about the density modulo $1$ of the values
taken by $L_i'(n)$, for $|n|<r$. However, since we will be working
with a set containing almost every $\alpha\in\R^{d(k-d)},$ we no
longer have the badly approximable hypothesis. This causes enough loss
in the arguments of the previous proofs to lead to upper bounds for
$\#\xi_1(r)$ which are worse than the trivial bounds obtained from
counting connected components. In slightly more detail, note that what
we can say for a.e. $\alpha$ is determined by Theorem
\ref{THM.KhinGros} and the Borel-Cantelli Lemma. For example, supposing that $B$ is fixed, it is
true that for a.e. $\alpha$, and for any $\epsilon>0,$ there is a
constant $C>0$ such that, for each $1\le i\le k-d$,
\[\|L_i'(n)\|\ge \frac{C}{|n|^d(\log
  (2+|n|))^{1+\epsilon}}\quad\text{for all}~n\in\Z^k\setminus\{0\}.\]
Following the argument of Lemma \ref{LEM.Reg1->Reg2ForBad}, we can
apply Theorem \ref{THM.TransferTheorem} to deduce that there is a
constant $c_3>0$ such that every connected component of
$\mathrm{reg}_2'(r)$ is contained in a cube in $F$ of side length
no greater than a constant times
\[\frac{(\log r)^{d(1+\epsilon)}}{r^d}.\]
However when we apply the next step of the argument we reach the
conclusion that every connected component of $\mathrm{reg}_2(r)$
extends to at most a constant multiple of
\[R(r):=(\log r)^{(k-d)(d+1)(1+\epsilon)}\] connected components of
$\mathrm{reg}_2(r+c_2)$. The argument in the proof of Lemma
\ref{LEM.VolumeBdForBAD} tells us that the number of possible volumes
of connected components of $\mathrm{reg}_2(r)$ is also bounded by a
constant times $R(r)$. In the end this gives the bound
\[\#\xi_1(r)\le cR(r)^{R(r)},\] which tends to infinity faster than
any power of $r$. This is much larger than the number of equivalence
classes of patches of size $r$, which in our case is only at
most a constant times $r^{d(k-d)}$.

In order to work around the problem described in the previous
paragraph, when dealing with $\xi_1(r)$ we will make use of the extra
hypothesis in Theorem \ref{THM.FreqsUppBds2} on the shape of
$\Omega$. We do not consider patches of arbitrary shape, but instead
restrict our attention to hypercubes.
This allows us to say something more precise
about the set $\mathrm{sing}_2(r+2c_2)\setminus\mathrm{sing}_2(r)$, as
demonstrated by the following lemma.
\begin{lemma}\label{LEM.Reg1->Reg2ForAE}
  Assume that $\Omega'$ is a hypercube $(-1,1)^{d}$,
and let
  $\psi:\R\rar [0,\infty)$ be a non-increasing function. Suppose that,
  for every $1\le i\le k-d,$
\begin{equation}\label{EQN.aeHyp1}
\|L_i'(n)\|\ge \frac{\psi (|n|)}{|n|^d}\quad\text{for all}~n\in\Z^k\setminus\{0\},
\end{equation}
and that, for every $1\le i\le k-d$ and $1\le j'\le d,$
\begin{equation}\label{EQN.aeHyp2}
\left\|\sum_{\substack{j=1\\j\not= j'}}^{d}n_j\beta_{ij}\right\|\ge \frac{3^d}{|n|^d\psi(|n|)^d}\quad\text{for all}~n\in\Z^k\setminus\{0\}.
\end{equation}
Then there exist $c_1,c_2>0$ such that every element of $\xi_1(r)$ is a sum of at most $c_1$ volumes of connected components of $\mathrm{reg}_2(r+c_2)$, divided by the volume of $\mc{W}$.
\end{lemma}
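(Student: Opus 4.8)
The plan is to run the proof of Lemma~\ref{LEM.Reg1->Reg2ForBad} again, feeding in the two quantitative hypotheses (\ref{EQN.aeHyp1}) and (\ref{EQN.aeHyp2}) in place of the badly approximable condition, and with the hypercube shape of $\Omega'$ replacing the bounding boxes $R_1\subseteq\Omega'\subseteq R_2$. First I would choose $c_2$ so that the inclusions (\ref{EQN.PatchInclusions}) hold, which (exactly as in the first paragraph of Lemma~\ref{LEM.Reg1->Reg2ForBad}) shows that each connected component of $\mathrm{reg}_1(r)$ lies inside a single component of $\mathrm{reg}_2(r-c_2)$ and has volume equal to a sum of volumes of components of $\mathrm{reg}_2(r+c_2)$. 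So it suffices to prove that every component of $\mathrm{reg}_2(r-c_2)$ is a union of at most $c_1$ components of $\mathrm{reg}_2(r+c_2)$. After applying $B^{-1}$ I argue throughout with $\mathrm{reg}_2'$ and the one-variable forms $L_i'$, working modulo $m_i^{-1}$ (equivalently, after rescaling by the bounded integers $m_i$, as in Lemma~\ref{LEM.Reg1->Reg2ForBad}).

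The next step records the size of a component. As in the first half of Lemma~\ref{LEM.Reg1->Reg2ForBad}, hypothesis (\ref{EQN.aeHyp1}) together with the transference Theorem~\ref{THM.TransferTheorem} shows that the values $\{L_i'(n)\mo m_i^{-1}:n\in r\Omega'\cap\Z^d\}$ are dense at scale a constant times $1/(r^d\psi(r)^d)$, so every connected component $U$ of $\mathrm{reg}_2'(r-c_2)$ is a box whose $i$-th side $I_i$ has length at most a constant times $1/(r^d\psi(r)^d)$. Passing to $\mathrm{reg}_2'(r+c_2)$ subdivides $U$ using only the newly added singular hyperplanes, so the number of resulting pieces is $\prod_{i=1}^{k-d}(1+N_i)$, where $N_i$ is the number of $n$ in the cubic shell $((r+c_2)\Omega'\setminus(r-c_2)\Omega')\cap\Z^d$ with $L_i'(n)\in I_i\mo m_i^{-1}$. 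Everything reduces to bounding each $N_i$ by a constant.

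This is where the hypercube hypothesis $\Omega'=(-1,1)^d$ is essential. For such an $\Omega'$ the shell consists exactly of the $n$ with $\max_j|n_j|$ in a window of width $O(c_2)$ about $r$, so it splits into boundedly many \emph{faces} indexed by a choice of maximal coordinate $j'$ and by the $O(c_2)$ integer values of $n_{j'}$. On a single face $n_{j'}$ is constant, so any two points $n\ne n'$ counted by $N_i$ there give $m:=n-n'$ (nonzero) with $m_{j'}=0$, whence $L_i'(m)=\sum_{j\ne j'}m_j\beta_{ij}$ is precisely the form controlled by (\ref{EQN.aeHyp2}). On the one hand $\|L_i'(m)\|\le|I_i|$ because $L_i'(n),L_i'(n')$ lie in the common short interval $I_i$; on the other hand (\ref{EQN.aeHyp2}) gives $\|L_i'(m)\|\ge 3^d/(|m|^d\psi(|m|)^d)$. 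Combined with the side length bound and with $|m|<2(r+c_2)$, and using that $\psi$ is non-increasing (so $\psi(|m|)$ is comparable to $\psi(r)$ when $|m|\asymp r$), these force $|m|$ to be at least a fixed multiple of $r$. Hence the relevant points on each face are pairwise separated by $\gtrsim r$ in the sup-norm; lying in a box of side $<2(r+c_2)$ in their $d-1$ free coordinates, there can be only $O(1)$ of them. Summing over the boundedly many faces gives $N_i=O(1)$, and the product over $i=1,\ldots,k-d$ bounds the number of new components by a constant $c_1$. Applying $B$ returns the conclusion to $\mathrm{reg}_2$.

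The genuinely delicate point is the constant bookkeeping in the separation estimate: the factor $3^d$ in (\ref{EQN.aeHyp2}) must be played against the constant in the side length bound and the factor $2^d$ coming from $|m|<2(r+c_2)$, and one must verify that $\psi(|m|)$ and $\psi(r)$ stay comparable across $r\le|m|\lesssim 2r$. This is exactly where the hypercube and the precise shape of (\ref{EQN.aeHyp2}) cannot be dispensed with: for a general convex $\Omega'$ one cannot pin a coordinate to force $m_{j'}=0$, the single omitted-variable form of (\ref{EQN.aeHyp2}) is unavailable, and the count $N_i$ degrades from a constant to a power of $\log r$, which is the obstruction described in the discussion preceding the statement.
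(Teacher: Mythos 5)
Your proposal matches the paper's proof essentially step for step: the same reduction via the inclusions (\ref{EQN.PatchInclusions}), the same use of (\ref{EQN.aeHyp1}) with Theorem \ref{THM.TransferTheorem} to show each component of $\mathrm{reg}_2'(r)$ is a box of side lengths at most $1/(m_i(2r\psi(r))^d)$, and the same exploitation of the hypercube shell, pigeonholing on the maximal coordinate so that a repeated value $n_{j'}=n'_{j'}$ makes (\ref{EQN.aeHyp2}) applicable to $n-n'$ and separates the hyperplanes (\ref{EQN.SingHyperplane}). The only, immaterial, difference is at the very end: where you deduce $|m|\gtrsim r$ and then count $O(1)$ points per face by packing, the paper notes that the lower bound $3^d/(|m|^d\psi(|m|)^d)\ge 1/(r^d\psi(r)^d)$ already exceeds the component's side length outright, so any collection $\mc{N}$ of shell vectors whose hyperplanes meet a common component has cardinality bounded by the pigeonhole constant itself (at most one per face), with the same implicit comparability of $\psi(|m|)$ and $\psi(r)$ that you correctly flag as the delicate bookkeeping point.
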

\begin{proof}
  Much of this proof follows the proof of Lemma
  \ref{LEM.Reg1->Reg2ForBad}, and we use the notation developed in the
  proof of that lemma. First, as before, there is a constant $c_2$
  such that (\ref{EQN.PatchInclusions}) holds for all $y\in Y$ and all
  sufficiently large $r$. By the same argument as before, we make use
  of (\ref{EQN.aeHyp1}) and apply Theorem \ref{THM.TransferTheorem},
  with $X$ chosen so that $X=2\psi(X)r$, to conclude that if $U$ is
  any connected component of $\mathrm{reg}_2'(r)$ then
\[U=\{t\in W: \ell_i<t_i<r_i\},\]
with
\[r_i-\ell_i\le\frac{1}{m_i(2r\psi (r))^d}\quad\text{for each}\quad 1\le i\le k-d.\]

Next we count the number of translates of hyperplanes corresponding to
$\mathrm{sing}_2'(r+2c_2)\setminus\mathrm{sing}_2'(r)$, which intersect
$U$. Each such affine hyperplane has the form
\begin{equation}\label{EQN.SingHyperplane}
H_i+L_i'(n)e_{d+i}+m_i^{-1}e_{d+i}\Z,
\end{equation}
for some $1\le i\le k-d$ and $n\in\Z^d$ with $r\le |n|<r+2c_2$. This
is where we begin making use of our hypothesis on $\Omega'$.

Suppose that $\mc{N}\subseteq\Z^d$ is a collection of integer vectors
$n$ with $r\le |n|<r+2c_2$. If $|\mc{N}|>4c_2(k-d)$ then there are
$n,n'\in\mc{N}$, and an index $1\le j'\le d$, for which
$n_j=n_j'$. Then, for any $1\le i\le k-d$ we have by
(\ref{EQN.aeHyp2}) that, for all sufficiently large $r$,
\[L_i'(n-n')\ge \frac{3^d}{(2r+4c_2)^d\psi (2r+4c_2)^d}\ge
\frac{1}{r^d\psi (r)^d}.\] It follows that, for each $i$, the number
of integers $n\in\Z^d$ with $r\le |n|<r+2c_2$, for which the
hyperplane (\ref{EQN.SingHyperplane}) intersects $U$, is bounded
by a constant that only depends on $\beta$.  The conclusion of the
lemma now follows in the same way as that of Lemma
\ref{LEM.Reg1->Reg2ForBad}.
\end{proof}
\begin{lemma}\label{LEM.VolumeBdForAE}
  Let $\psi:\R\rar [0,\infty)$ be a non-increasing function and
  suppose that (\ref{EQN.aeHyp1}) holds for all $1\le i\le k-d$. Then,
  even without the additional hypothesis on
  $\Omega'$, there is a constant $c_4>0$ such that, for any $r>0$,
  there are at most
\[c_4\psi(r)^{-(d+1)(k-d)}\]
distinct volumes of connected components of $\mathrm{reg}_2(r)$.
\end{lemma}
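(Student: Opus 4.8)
The plan is to follow the structure of the proof of Lemma~\ref{LEM.VolumeBdForBAD}, but to count the distinct gap lengths \emph{quantitatively} rather than merely observing that there are boundedly many of them. The first observation is that the side-length estimate established in the opening paragraph of the proof of Lemma~\ref{LEM.Reg1->Reg2ForAE} uses only hypothesis (\ref{EQN.aeHyp1}) and Theorem~\ref{THM.TransferTheorem} (with $X$ chosen so that $X=2\psi(X)r$), and in particular does not use the hypercube hypothesis on $\Omega'$ or hypothesis (\ref{EQN.aeHyp2}); hence it is available in the present setting. This gives that every connected component $U$ of $\mathrm{reg}_2'(r)$ is a rectangle whose $i$-th side length is at most $\delta_i:=1/(m_i(2r\psi(r))^d)$. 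Writing $U$ in the form (\ref{EQN.GenericComponent}), the $i$-th side length equals the positive representative of $L_i'(m)\bmod m_i^{-1}$ for some $m=n_i'-n_i$ with $|m|\le 2r\,\mathrm{diam}(\Omega')$, and is at most $\delta_i$.

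Since the volume of $U$ is the product of its $k-d$ side lengths, the number of distinct volumes is at most the product over $i$ of the number of distinct $i$-th side lengths, so it suffices to bound, for each fixed $i$, the number of distinct values taken by the $i$-th side length by a constant times $\psi(r)^{-(d+1)}$. Each such value is a gap of length at most $\delta_i$ between consecutive points of $\{L_i'(n)\bmod m_i^{-1}\}$ in the torus $\R/m_i^{-1}\Z$. The crux is a separation estimate: if $g\ne g'$ are two such gap values, each at most $\delta_i<m_i^{-1}/2$, then $g-g'\equiv L_i'(m-m')\pmod{m_i^{-1}}$ for integer vectors with $|m|,|m'|\le 2r\,\mathrm{diam}(\Omega')$, hence $|m-m'|\le 4r\,\mathrm{diam}(\Omega')$ and
\[
|g-g'|=\frac{\|m_iL_i'(m-m')\|}{m_i}\ge \frac{c\,\psi(r)}{r^d},
\]
where we apply (\ref{EQN.aeHyp1}) to the rescaled form $m_iL_i'$ (exactly as in the proof of Lemma~\ref{LEM.Reg1->Reg2ForBad}), together with the monotonicity of $\psi$ and $|m-m'|=O(r)$. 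Thus the distinct small gap values are $\bigl(c\psi(r)/r^d\bigr)$-separated points lying in an interval of length $\delta_i=O\bigl((r\psi(r))^{-d}\bigr)$, and there are at most
\[
1+\frac{\delta_i r^d}{c\,\psi(r)}\le \frac{c'}{\psi(r)^{d+1}}
\]
of them, the additive constant being absorbed since $\psi(r)\le 1$ for large $r$. Taking the product over the $k-d$ coordinate directions yields the claimed bound $c_4\,\psi(r)^{-(d+1)(k-d)}$.

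The step I expect to require the most care is the separation estimate, and specifically the passage $\psi(|m-m'|)\ge c\,\psi(r)$: since $|m-m'|$ is controlled only by a constant multiple of $r$, monotonicity of $\psi$ gives $\psi(|m-m'|)\ge \psi(4r\,\mathrm{diam}(\Omega'))$, which is comparable to $\psi(r)$ for the slowly varying functions (essentially negative powers of $\log r$) to which the lemma is applied. A secondary piece of bookkeeping is to check that two distinct small gap values, after reduction modulo $m_i^{-1}$, differ by the genuine distance of $L_i'(m-m')$ to $m_i^{-1}\Z$ rather than by a larger residue; this is exactly where the smallness $\delta_i<m_i^{-1}/2$ is used, and it is what allows the single application of the Diophantine hypothesis above.
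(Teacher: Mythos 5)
Your proof is correct and takes essentially the same route as the paper: the paper's own proof simply cites the box side-length bound of order $\bigl(r\psi(r)\bigr)^{-d}$ established in the proof of Lemma \ref{LEM.Reg1->Reg2ForAE} (which, as you observe, uses only (\ref{EQN.aeHyp1})) and then reruns the counting argument of Lemma \ref{LEM.VolumeBdForBAD}, which is precisely your separation-and-packing estimate made explicit. The caveat you flag, that monotonicity only yields $\psi(|m-m'|)\ge\psi(Cr)$ rather than $c\,\psi(r)$, is equally implicit in the paper's argument and is harmless in the intended application $\psi(r)=(\log r)^{-1-\epsilon}$.
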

\begin{proof}
  In the proof of the previous lemma we showed that, under hypothesis
  (\ref{EQN.aeHyp1}), every connected component of
  $\mathrm{reg}_2'(r)$ is contained in a box of side length at
  most a constant times
\[\frac{1}{r^d\psi (r)^d}.\]
The rest of the proof follows by using (\ref{EQN.aeHyp1}) again,
together with the same argument used to prove Lemma
\ref{LEM.VolumeBdForBAD}.
\end{proof}
To finish the proof of Theorem \ref{THM.FreqsUppBds2} we apply the Borel-Cantelli Lemma (see the paragraph before Theorem
\ref{THM.KhinGros}) to each of the individual linear forms $L_i'$ in
$d$-variables and then, for each $i$ and $j'$, to each of the linear
forms on the left of (\ref{EQN.aeHyp2}) in $(d-1)$-variables. In this
way we find that if we take $\psi(r)=(\log r)^{-1-\epsilon}$ then, for
each $B$, hypotheses (\ref{EQN.aeHyp1}) and (\ref{EQN.aeHyp2}) are
satisfied for almost every choice of $\beta$. If we take the image of
this full measure set of $\beta$ under the non-degenerate linear map
$B$ we obtain a full measure set of $\alpha$. Intersecting over all
choices for $B$ still leaves us with a full measure set of $\alpha$ to
which Lemmas \ref{LEM.Reg1->Reg2ForAE} and \ref{LEM.VolumeBdForAE}
apply, giving us the bounds in Theorem \ref{THM.FreqsUppBds2}.

Finally note that for the first part of Theorem
\ref{THM.FreqsUppBds2}, the bound for $\xi_2(r)$, we do not need Lemma
\ref{LEM.Reg1->Reg2ForAE} and can instead appeal directly to Lemmas
\ref{LEM.FreqsVols2} and \ref{LEM.VolumeBdForAE}. Therefore for that
bound the additional hypothesis on $\Omega$ is not necessary, and
neither for that matter is hypothesis (\ref{EQN.aeHyp2}).

\section{Proof of Theorem \ref{THM.FreqsLowBds}}
In this section we turn to the problem of finding lower bounds for the number of distinct frequencies. For simplicity we consider only type $2$ patches, and we specialize to the case when $\mc{W}$ is the parallelotope in $F$ generated by the standard basis vectors $e_{d+1},\ldots , e_k$.

By Lemma \ref{LEM.FreqsVols2} we know that $\#\xi_2(r)$ is equal to the number of distinct volumes of $\mathrm{reg}_2(r)$. For $r>0$, define
\begin{equation*}
\mc{A}(r):=\{L_1(n)~\mathrm{mod}~1:n\in r\Omega'\cap\Z^d\}\subseteq\R/\Z,
\end{equation*}
where $\Omega'$ is defined as in the proof of Lemma \ref{LEM.Reg1->Reg2ForBad}. Thinking of the elements of $\mc{A}(r)$ as being arranged in order on the circle, the number of distinct distances between consecutive elements gives a lower bound for $\#\xi_2(r)$. This follows from formula (\ref{EQN.GenericComponent}) for the generic connected component of $\mathrm{reg}_2(r)$, by fixing $i=2,\ldots , k-d$ and then letting the pair $(n_1,n_1')$ run over all choices which give consecutive elements of $\mc{A}(r)$. Therefore, in proving Theorem \ref{THM.FreqsLowBds} we will consider one linear form in $d$ variables, effectively concentrating on $L_1$ and ignoring the variables of $\alpha$ corresponding to the other linear forms. With this strategy in mind, we present the following lemma, the proof of which is a modification of the argument used to prove \cite[Lemma 6.1]{BlehHommJiRoedShen2012}.
\begin{lemma}\label{LEM.InfGaps}
Suppose $d\ge 2,$ let $\epsilon>0$, and, using the notation introduced in Section \ref{SEC.Diophantine}, suppose that $\alpha=(\alpha_1,\ldots ,\alpha_d)\in\R^d$ satisfies the following hypotheses:
\begin{enumerate}
\item[(i)] $(\alpha_1,\ldots ,\alpha_{d-1})\in\mc{B}_{d-1,1}$,\vspace*{.05in}
\item[(ii)] $\alpha_d/\alpha_1\in\mc{E}(q^{-(2d-1+\epsilon)})\cap (0,1/2),$ and\vspace*{.05in}
\item[(iii)] the collection of numbers $\{1,\alpha_1,\ldots ,\alpha_d\}$ is $\Q$-linearly independent.
\end{enumerate}
Then, taking $L_1$ to be the linear form determined by $\alpha$, and $\Omega'$ to be the region
\[\Omega'=\{x\in (0,1]^d : x_d+2x_1\le 1\},\]
there is a constant $C>0$ such that, for a set of numbers $r>0$ which can be taken arbitrarily large, the number of distinct distances between consecutive elements of $\mc{A}(r)$ is greater than $Cr^{\epsilon/(d+\epsilon)}$.
\end{lemma}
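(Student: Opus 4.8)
The plan is to exploit the exceptionally good rational approximations to $\theta := \alpha_d/\alpha_1$ guaranteed by hypothesis (ii) in order to manufacture a near-resonance in the values of $L_1$, and then to read off many distinct gaps from the resulting two-scale structure of $\mc{A}(r)$. First I would fix a good denominator $q$ with $\|q\theta\|\le q^{-(2d-1+\epsilon)}$, so that $|q\theta-p|\le q^{-(2d-1+\epsilon)}$ for the nearest integer $p$ and $\eta:=\theta-p/q$ satisfies $|\eta|\le q^{-(2d+\epsilon)}$; hypothesis (iii) guarantees $\theta$ is irrational, so $\eta\neq 0$, and hypothesis (ii) supplies infinitely many such $q$, which will provide the arbitrarily large values of $r$. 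Writing $\alpha_d=\alpha_1\theta$ and grouping the $n_1,n_d$ variables, one has, for $n\in r\Omega'\cap\Z^d$,
\[ L_1(n)=\frac{\alpha_1}{q}\,M+\alpha_1\eta\, n_d+\sum_{j=2}^{d-1}\alpha_j n_j,\qquad M:=qn_1+pn_d\in\Z. \]
The first term is a coarse grid of spacing $\approx \alpha_1/q$ indexed by the integer $M$, the middle term is a fine perturbation of size at most $\alpha_1|\eta|\,r$, and $M$ is constant along the lattice direction $v=(-p,0,\dots,0,q)$, on which $L_1$ changes only by the tiny amount $\alpha_1(q\theta-p)=\alpha_1\eta q$.

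Next I would analyze how the region $\Omega'=\{x\in(0,1]^d:x_d+2x_1\le 1\}$ interacts with this structure. Since $\theta<1/2$ we have $q-2p>0$, so moving along $v$ strictly increases $x_d+2x_1$; hence each resonance line $\{M=\mathrm{const}\}$ meets $r\Omega'$ in a segment whose length $\ell_M$ is finite and varies, from $O(1)$ up to $\asymp r/q$, as $M$ sweeps across the triangle. The points of $\mc{A}(r)$ thus organize into clusters, one per occupied value of $M$, each cluster being a short arithmetic progression of step $\alpha_1\eta q$ and length $\ell_M$. As $M$ varies, both the cluster lengths $\ell_M$ and the fine offsets $\alpha_1\eta n_d$ at the cluster boundaries run through many distinct values, and the gap immediately following a cluster is determined by these data. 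Since we only need a lower bound, it suffices to restrict to a convenient sub-family of these cluster boundaries and argue that distinct cluster geometries force distinct gap lengths.

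The main obstacle — and the heart of the argument, which follows \cite[Lemma 6.1]{BlehHommJiRoedShen2012} with modifications — is to show that these candidate gaps are genuinely gaps between \emph{consecutive} elements of $\mc{A}(r)$, i.e. that no third point intrudes and that the configurations one counts really are distinct on the circle. This is where hypotheses (i) and (iii) enter. Hypothesis (iii) guarantees that distinct $n$ give distinct points of $\mc{A}(r)$ and that no accidental coincidence collapses two separate gaps into one. Hypothesis (i), that $(\alpha_1,\dots,\alpha_{d-1})\in\B_{d-1,1}$, is used through Theorem \ref{THM.TransferTheorem} to bound the coarse grid spacing from below and to rule out stray lattice points falling into the small gaps created by the resonance; quantitatively it controls how tightly the values of the badly approximable $(d-1)$-form can accumulate, and hence limits the interference between neighbouring clusters. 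Establishing this separation with the correct quantitative strength is the delicate step.

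Finally, I would optimize the free parameter $r$ against $q$, taking $r\asymp q^{\kappa}$ for an appropriate exponent $\kappa=\kappa(d,\epsilon)$. This balances two competing requirements: $r$ must be large enough that the clusters are long and numerous, so that many distinct lengths $\ell_M$ and offsets occur, yet small enough relative to the approximation strength $|\eta|\le q^{-(2d+\epsilon)}$ that the separation argument of the previous paragraph remains valid. Carrying out this balance converts the count of distinct cluster configurations into a bound of the form $Cr^{\epsilon/(d+\epsilon)}$, and since hypothesis (ii) furnishes infinitely many admissible $q$, the corresponding values of $r$ may be taken arbitrarily large, completing the proof.
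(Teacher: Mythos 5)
Your outline follows the paper's proof quite closely in its overall architecture: the same resonance $p/q$ from hypothesis (ii) (the paper writes $\alpha_d=p(\alpha_1/q)+\gamma$ with $|\gamma|\le\alpha_1 q^{-(2d+\epsilon)}$, i.e.\ your $\gamma=\alpha_1\eta$), the same two-scale cluster picture with resonance segments $\{(a_i+jp,\,B_i-jq): 0\le j<B_i/q\}$ terminated inside $r\Omega'$ because $p/q<1/2$ makes $x_d+2x_1$ increase along the resonance direction, and the same balance $r\asymp q^{1+\epsilon/d}$ producing $r/q\asymp r^{\epsilon/(d+\epsilon)}$. (One small correction to the picture: the clusters are indexed by the full vector $m=(qn_1+pn_d,n_2,\dots,n_{d-1})$, not just by $M$.) However, one tool you invoke would fail as stated: Theorem \ref{THM.TransferTheorem} converts homogeneous \emph{non}-approximability into inhomogeneous \emph{solvability}, i.e.\ it gives upper bounds on how far points of the torus can be from the value set (density statements, as used in Lemmas \ref{LEM.Reg1->Reg2ForBad} and \ref{LEM.Reg1->Reg2ForAE}), not lower bounds on spacing. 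The coarse separation $c/(r^{d-1}q^d)$ between distinct values $\{m_1(\alpha_1/q)+\sum_{j=2}^{d-1}m_j\alpha_j\}$ comes from hypothesis (i) used \emph{directly}: factor out $1/q$ and apply $(\alpha_1,\dots,\alpha_{d-1})\in\mc{B}_{d-1,1}$ to the integer form $m_1\alpha_1+\sum_j(qm_j)\alpha_j$. No transference appears anywhere in this proof.

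The more substantive gap is in your key step, ``distinct cluster geometries force distinct gap lengths,'' which you defer rather than prove; this is precisely where the paper's one genuinely new device lives. The gap at the boundary following a cluster has the form $\Delta=\bigl\{\sum_{i=1}^{d-1}m_i\alpha_i-(B_i-b_i)\alpha_d\bigr\}$, where $B_i$ is the top value of $n_d$ in the cluster and $b_i$ the bottom value in the next one (with $0<b_i\le q$ always), so by hypothesis (iii) two such gaps can coincide only if their $\alpha_d$-coefficients, the deficits $B_i-b_i$, agree. Merely observing that the lengths $\ell_M$ and the offsets vary does not obviously yield $\gg r/q$ boundaries with \emph{pairwise distinct} deficits: different geometries could a priori produce equal deficits. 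The paper's fix is an explicit selection: for each integer $1\le k\le r/q$, pick the cluster with $m_1=kqp$ (realized by $n_1=0$, $n_d=kq$), so that $B_{i_k}=kq$ and hence $B_{i_k}-b_{i_k}\in[(k-1)q,\,kq)$; these ranges are disjoint in $k$, forcing the $r/q$ boundary gaps $\Delta_k$ to be pairwise distinct via (iii). With that selection inserted (and the transference citation replaced by the direct use of (i)), your plan matches the paper's argument and closes.
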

\begin{proof}
By hypothesis (ii), we can choose $p,q\in\N$, coprime and arbitrarily large, so that $p/q<1/2$ and
\[\alpha_d=p\left(\frac{\alpha_1}{q}\right)+\gamma,\quad\text{with}\quad |\gamma|\le\frac{\alpha_1}{q^{2d+\epsilon}}.\]
We consider only the case when $\gamma>0$, as the proof of the other case is similar. We have, for $r>0$, that
\[\mc{A}(r) = \left\{\ell \left(\frac{\alpha_1}{q}\right)+\sum_{i=2}^{d-1}n_i\alpha_i+n_d\gamma : \ell=n_1q+n_dp,~ n\in r\Omega'\cap \mathbb{Z}^d\right\}.\]
In order to understand how the elements of $\mc{A}(r)$ are ordered on the circle, first let $\mc{M}(r)\subseteq\Z^{d-1}$ be the collection of integer vectors $m$ which can be written in the form
\[m=(n_1q+n_dp, n_2,\ldots ,n_{d-1})\quad\text{for some}\quad n\in r\Omega'\cap\Z^d.\]
Label this set as $\mc{M}(r)=\{m^{(i)}\}_{i=1}^M$, so that
\begin{equation}\label{EQN.KLabel}
\left\{m^{(i)}_1\left(\frac{\alpha_1}{q}\right)+\sum_{j=2}^{d-1}m^{(i)}_j\alpha_j\right\} < \left\{m^{(i+1)}_1\left(\frac{\alpha_1}{q}\right)+\sum_{j=2}^{d-1}m^{(i+1)}_j\alpha_j\right\},
\end{equation}
for each $1\le i<M$ (where $\{\cdot\}$ denotes the fractional part). Where convenient below, we will identify $m^{(M+1)}$ with $m^{(1)}$. By factoring out $1/q$ and applying hypothesis (i), we see that the difference between the right and left hand sides of the previous equation is greater than
\[\frac{c}{r^{d-1}q^d},\]
for some constant $c>0$. On the other hand,
\[|n_d\gamma|\le\frac{\alpha_1r}{q^{2d+\epsilon}},\]
and this motivates us to select $r=(c/\alpha_1)^{1/d}q^{1+\epsilon/d}$. With this choice of $r$ it follows that, for each $1\le i<M$,  as $n$ runs over all elements of $r\Omega'\cap\Z^d$ which correspond to $m^{(i)}\in\mc{M}$, the values taken by $\{L_1(n)\}$ fall between the numbers on the left and right hand sides of (\ref{EQN.KLabel}). Since every value of $L_1(n)$ is uniquely determined by specifying the corresponding values of $m^{(i)}$ and $n_d$, we see that the ordering of the fractional parts of $L_1(n)$ is given by the lexicographic ordering of the pairs $(m^{(i)},n_d)$, first by $i$ and then by $n_d$.

For each integer $1\le i\le M$, let $a_i$ and $B_i$ be positive integers satisfying
\[m^{(i)}_1=a_i q+B_i p\quad\text{and}\quad B_i+2a_i\le r,\]
with $B_i$ taken as large as possible (obviously, this choice is unique). Then, using the fact that $p/q<1/2$, we have that
\begin{align*}
&\{(n_1,n_d) \in \mathbb{N}^2 :  n_1q+n_dp=m^{(i)}_1,~n_d+2n_1\le r\} \cr
&\qquad\qquad=\{(a_i+jp,B_i-jq) :  0\le j< B_i/q\}.
\end{align*}
We use this observation to identify the collection of values of $n_d$ corresponding to a given value of $m^{(i)}_1$.
Now let $b_i$ be the integer with the property that $(m^{(i)},B_i)$ and $(m^{(i+1)},b_i)$ are consecutive in the lexicographic ordering. It follows from the formula above that $0<b_i\le q$.

Finally, for each integer $1\le k\le r/q$ choose an integer $1\le i_k\le M$ with $m^{(i)}_1=kqp$. Such an integer always exists, as can be seen by taking $n_1=0$ and $n_d=kq$, and it is also clear that $B_{i_k}=kq$. Let $n^{(k)}$ and $n^{(k)'}$ be the elements of $r\Omega'\cap\Z^d$ corresponding to $(m^{(i_k)},B_{i_k})$ and $(m^{(i_k+1)},b_{i_k})$, respectively, and set
\begin{align*}
\Delta_k=\left\{L_1(n^{(k)'})-L_1(n^{(k)})\right\}.
\end{align*}
We have that
\begin{align*}
\Delta_k=\left\{\sum_{i=1}^{d-1}m_i\alpha_i-(B_{i_k}-b_{i_k})\alpha_d\right\},
\end{align*}
for some integers $m_i$. Now the key observation is that
\[(k-1)q\le B_{i_k}-b_{i_k}<kq,\]
so that different choices of $k$ necessarily give different values of $B_{i_k}-b_{i_k}$. By hypothesis (iii) this implies that different choices of $k$ also give different values of $\Delta_k$. Therefore, taking $q$ sufficiently large, we find that the number of distinct distances between consecutive elements of $\mc{A}(r)$ is at least
\[\frac{r}{q}\ge Cr^{\epsilon/(d+\epsilon)},\]
for some constant $C>0$.
\end{proof}
We leave it to the reader to check that the proof of the preceding lemma can be modified to deal with any translate of the region $\Omega'$ in its statement.

To complete the proof of Theorem \ref{THM.FreqsLowBds}, consider the set of points $\alpha\in\R^{d(k-d)}$ for which $(\alpha_{11},\alpha_{12},\ldots ,\alpha_{1d})$ satisfies the hypotheses of Lemma \ref{LEM.InfGaps}. The set of points $(\alpha_{11},\ldots ,\alpha_{1(d-1)})$ in $\R^{d-1}$ which lie in $\mc{B}_{d-1,1}$ has Hausdorff dimension $d-1$. For every point in this set, the set of real numbers $\alpha_{1d}$ for which \[\alpha_{1d}/\alpha_{11}\in\mc{E}(q^{-(2d-1+\epsilon)})\cap (0,1/2)\]
has Hausdorff dimension $2/(2d+\epsilon)$. The latter claim follows from the well known Jarn\'{i}k-Besicovitch Theorem, a statement of which can be found in \cite[Section 12]{BereDickVela2006}. Finally, the collection of points which do not satisfy hypothesis (iii) in Lemma \ref{LEM.InfGaps} is countable. Using basic properties of Hausdorff dimension (see \cite[Exercise 7.9]{Falc2003}), we conclude that, for each choice of $\epsilon>0,$ the collection of points $\alpha\in\R^{d(k-d)}$ to which our lemma can be applied has Hausdorff dimension
\[d(k-d-1)+(d-1)+2/(2d+\epsilon).\]
Taking the union of these sets over all $\epsilon>0$ gives a set of dimension $d(k-d)+(1/d-1)$. For each point in this set we can choose $\Omega$ so that the corresponding set $\Omega'$ is a translate of the set in the statement of Lemma \ref{LEM.InfGaps}. Then, by the discussion at the beginning of this section, we obtain the statement in the conclusion of Theorem \ref{THM.FreqsLowBds}.

We point out that the proof above can be modified to give faster growth rates for $\#\xi_2(r)$, on a set of smaller Hausdorff dimension.

\appendix
\section{Topological proof of the Steinhaus Theorem}
The Steinhaus Theorem, also known as the Three Distances Theorem, is the statement that, for any $\alpha\in\R$ and for any $N\in\N$, removing the collection of points
\[\{n\alpha~\mathrm{mod}~1:1\le n\le N\}\subseteq\R/\Z\]
partitions $\R/\Z$ into component intervals of at most $3$ distinct lengths, and if there are $3$ lengths then one is the sum of the other two. There are many proofs of this theorem. Here we offer our own which, although not the most direct, illustrates how the topological and dynamical ideas discussed in this paper can be brought to bear on such problems. The idea of our proof also applies to other Steinhaus-type problems (for example, those discussed in \cite{CobeGrozVajaZaha2002}, \cite{FraeHolz1995}, and \cite{GeelSimp1993}).

Let $\alpha$ and $N$ be given and let $\mc{J}_N$ be the collection of component intervals of the set
\[\R/\Z\setminus\{n\alpha~\mathrm{mod}~1:1\le n\le N\}.\]
Let $T_\alpha:\R/\Z\rar\R/\Z$ be rotation by $\alpha$, defined by $T_\alpha(x)=x+\alpha$, and define a CW-complex $\Gamma_N$ as follows:
\begin{enumerate}
\item[(i)] $0$-cells of $\Gamma_N$ are indexed by elements of $\mc{J}_N$, and
\item[(ii)] For $J,J'\in\mc{J}_N$, there is a $1$-cell from $J$ to $J'$ if and only if $T_\alpha(J)\cap J'\not=\emptyset$.
\end{enumerate}
For any $0$-cell $J$, if $-(N+1)\alpha\in J$ then $T_\alpha(J)$ intersects two elements of $\mc{J}_N.$ Otherwise $T_\alpha (J)$ intersects one element of $\mc{J}_N$. Similarly, if $(N+1)\alpha\in J$ then $T^{-1}_\alpha(J)$ intersects two elements of $\mc{J}_N.$ Otherwise $T^{-1}_\alpha (J)$ intersects one element of $\mc{J}_N$. It follows that $\Gamma_N$ is homotopic either to a circle  or to a bouquet of two circles, and in particular that it takes one of the three forms illustrated in Figure 4.

\begin{figure}[h]
\centering
\def\svgwidth{0.60\columnwidth}
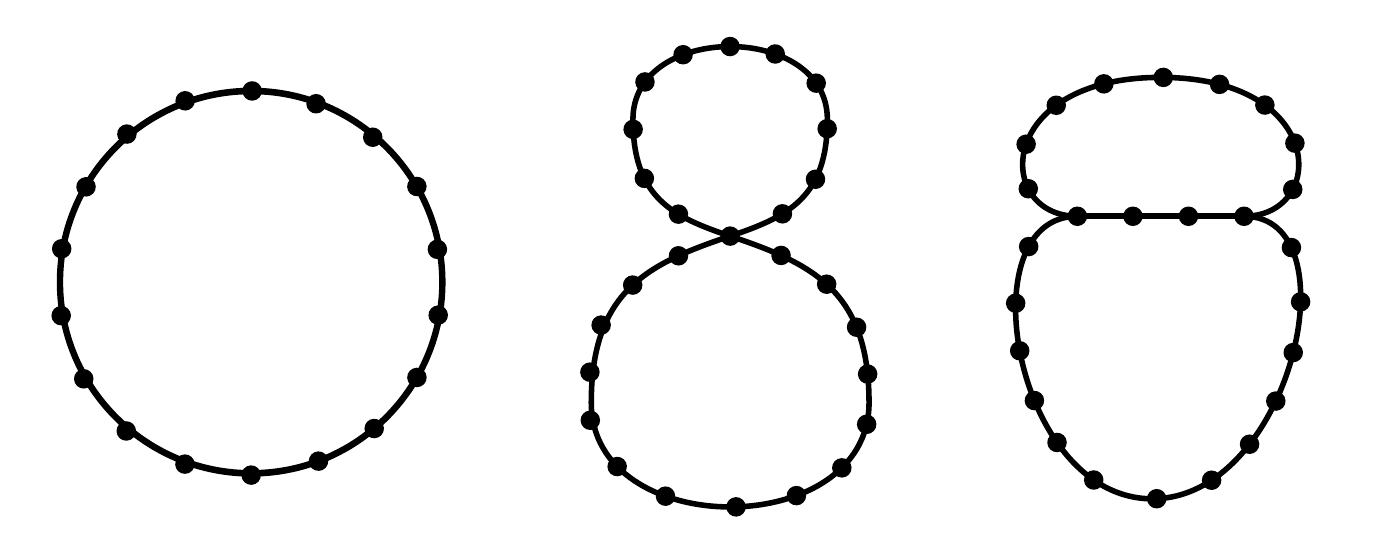
\caption{Possible shapes for $\Gamma_N$.}
\end{figure}

The leftmost diagram in Figure 4 can only occur if $\alpha$ is rational and $N$ is greater than or equal to the order of $\alpha$ in $\Q/\Z$. The middle diagram represents the `case of two distances' in the Three Distances Theorem, which can occur for any $\alpha$ but depends on $N$, and the right hand diagram is the `generic' case for irrational $\alpha$.

For simplicity let us assume that $\alpha$ is irrational and that we are in the rightmost case of Figure 4 (the other cases follow from similar arguments). Choose any point $\beta\in\R$ which is not in the set $\alpha\Z+\Z$, and for each $J\in\mc{J}_N$ let
\[\xi_J=\lim_{R\rar\infty}\frac{1}{R}\sum_{1\le i\le R}\chi_J\left(T^i_\alpha(\beta)\right),\]
where $\chi_J$ denotes the indicator function of $J\subseteq\R/\Z$. Whatever the choice of $\beta$, it follows from the unique ergodicity of the map $T_\alpha$ that $\xi_J=|J|$. Note that, even if we didn't have unique ergodicity, we could still apply the Birkhoff Ergodic Theorem to a generic point $\beta$ to draw the same conclusion.

Finally, referring again to Figure 4, it is clear that intervals $J$ corresponding to $0$-cells on the same line segment $\mc{I}_i$, for $i=1,2,$ or $3$, give the same values of $\xi_J$. Therefore there are at most $3$ possible values for the lengths $|J|$. From Figure 4 it is also clear that the length corresponding to $\mc{I}_2$ is the sum  of the lengths corresponding to $\mc{I}_1$ and $\mc{I}_3$.

\vspace{.1in}

{\footnotesize
\noindent AH, HK, JW\,:\\
Department of Mathematics, University of York,\\
Heslington, York, YO10 5DD, England\\
alan.haynes@york.ac.uk\\henna.koivusalo@york.ac.uk\\jamie.walton@york.ac.uk

\vspace{.1in}

\noindent LS\,:\\
Department of Mathematics, University of Texas at Austin,\\
Austin, TX 78712-1082, USA\\
sadun@math.utexas.edu

}

\end{document}